\newcommand{\QNUM}{D}
\newcommand{\HYBINP}{\mathcal{U}}
\newcommand{\LSS}{DTLSS}
\newcommand{\BLSS}{DTLSS\ }
\newcommand{\SLSS}{DTLSSs}
\newcommand{\BSLSS}{DTLSSs\ }
\newcommand{\GCR}{\textbf{GCR}}
\newcommand{\IM}{\mathrm{Im}}
\newcommand{\Rank}{\mathrm{rank}\mbox{ } }
\newtheorem{Theorem}{Theorem}
\newtheorem{Lemma}{Lemma}
\newtheorem{Definition}{Definition}
\newtheorem{Remark}{Remark}
\newtheorem{Notation}{Notation}
\renewcommand{\paragraph}[1]{\smallskip\noindent\textbf{#1.} }
\renewcommand{\Re}{\mathbb{R}}
\title{On the notion of persistence of excitation for linear switched systems}
\date{}
\author{Mih\'aly Petreczky$^\dag$ and Laurent Bako$^\ddag$ \\
    $^{\dag}$Maastricht University,  \\
   P.O. Box 616, 6200 MD Maastricht, The Netherlands\\
   \texttt{M.Petreczky@maastrichtuniversity.nl} \\
  $^{\ddag}$Univ Lille Nord de France, \\ F-59000 Lille, France,\\
    and EMDouai, IA, F-59500 Douai, France,  \\
      \texttt{laurent.bako@mines-douai.fr.} 
}
\begin{document}
\maketitle

\setstretch{1.25}

\begin{abstract}
 The paper formulates the concept of persistence of excitation for
 discrete-time linear switched systems, and provides sufficient
 conditions for an input signal to be persistently exciting.
 Persistence of excitation is formulated as a property of the
 input signal, and it is not tied to any specific identification
 algorithm. The results of the paper rely on realization theory
 and on the notion of Markov-parameters for
 linear switched systems.
\end{abstract}

\section{Introduction}
 The paper formulates the concept of persistence of excitation for
 \emph{discrete-time linear switched systems} (abbreviated by \SLSS).
 \BSLSS are one of the simplest and best studied
 classes of hybrid systems, \cite{Sun:Book}. A \LSS\ is a
 discrete-time switched system, such that the continuous
 sub-system associated with each discrete state is linear.
 The switching signal is viewed as an external input, and all
 linear systems live on the same input-output- and state-space.

 We define persistence of excitation for input signals. More 
 precisely, we will call an input signal persistently exciting
 for an input-output map, if the response of the input-output map
 to that particular input determines the input-output map
 uniquely. In other words, the knowledge of the output response
 to a persistently exciting input should be sufficient to predict
 the response to \textbf{any} input. 

 Persistence of excitation is essential for system identification
 and adaptive control. Normally, in system identification the
 system of interest is tested only for one input sequence.
 One of the reason for this is that our notion of the system
 entails a fixed initial state. However, any experiment
 changes that particular initial state and it is in general not
 clear how to reset the system to a particular initial state.
 The objective is to find a system model based on
 the response to the chosen input. However, the knowledge of a model
 of the system immediately implies that the response of the
 system to \textbf{any} input is known. Hence, intuitively it is
 clear that persistence of excitation of the input signal is a 
 prerequisite for a successful identification of a model.
 
 Note that persistence of excitation is a joint property of
 the input and of the input-output map. That is, a particular
 input might be persistently exciting for a particular system and
 might fail to be persistently exciting for another system.
 In fact, it is not a priori clear if any system admits 
 a persistently exciting input. 
 This calls for investigating classes of inputs which are
 persistently exciting for some broad classes of systems. 
 
 In the existing literature,
 persistence of excitation is often defined as a specific property
 of the measurements which is  \textbf{sufficient} for
 the correctness of \textbf{some}
 identification algorithm. In contrast, in this paper we propose
 a definition of persistence of excitation which is \textbf{necessary}
 for the correctness of \textbf{any} identification algorithm.
 \footnote{In fact, we also propose a specific algorithm for the
   correctness of which persistence of excitation is sufficient,
   but we do not claim this is true for all identification algorithms.}
 Obviously, the two approaches are complementary. In fact, we hope
 that the results of this paper can serve as a starting point to
 derive persistence of excitation conditions for 
 specific identification algorithms.

 \textbf{Contribution of the paper }
  We define persistence of excitation for finite input sequences
  and persistence of excitation for infinite
  input sequences. 

  We show that for every input-output map
  which is realizable by a reversible \LSS, there exists a finite 
  input sequence which is persistently exciting for that 
  particular input-output map. A reversible \LSS\ is a \LSS\ 
  continuous dynamics of which is invertible. Such systems arise
  naturally by sampling continuous-time systems.
  In addition, we define the class of
  reversible input-output maps and show that there is a 
  finite input sequence which is persistently exciting for all
  the input-output maps of that class. Moreover, we present a
  procedure for constructing such an input sequence.

  We show that 
  there exists a class of infinite input sequences which are 
  persistently exciting for all the input-output maps which are 
  realizable by a \emph{stable \LSS}. The conditions which the input
  sequence must satisfy is that each subsequence occurs there infinitely often (i.e. the switching signal is rich enough) and that
  the continuous input is a colored noise. Hence, this result
  is consistent with the classical result for linear systems.

  It might be appealing to interpret the conditions above as 
  ones which ensure that one stays in every discrete mode long
  enough and the continuous input is persistently exciting in the
  classical sense. One could then try to identify the linear subsystems
  separately and merge the results. Unfortunately, such an 
  interpretation is in general incorrect. The reason for this is
  that there exists a broad class of input-output maps which
  can be realized by a linear switched system but not by a 
  switched system whose linear subsystems are minimal, 
  \cite{MPLBJH:Real}.
  The above scheme obviously would not work for such systems.
  In fact, for such systems one has to test the system's response not
  only for each discrete mode, but for each combination of discrete
  modes.  

  The main idea behind the definition of persistence of excitation
  and the subsequent results is as follows. From realization 
  theory \cite{MPLBJH:Real} we know that the knowledge of
  (finitely many) Markov-parameters of the input-output map
  is sufficient for computing a \LSS\ realization of that map.
  Hence, if the response of the input-output map to a 
  particular input allows us to compute the necessary Markov-parameters, then we can compute a \LSS\ representation of that map.
  This can serve as a definition of persistence of excitation.
  We call a input sequence persistently exciting, if
  the Markov-parameters of the input-output map can be computed from
  the response of the map to that input. We call an infinite
  sequence input persistently exciting, if from a large enough
  finite initial part of the response one can compute an arbitrarily
  precise approximation of the Markov-parameters.
  Since the realization algorithm for \LSS\ is continuous in
  the Markov-parameters, it means that a
  persistently exciting infinite input sequence
  allows the computation of an
  arbitrarily precise approximation of a \LSS\ realizing the input-output
  map.

\textbf{Motivation of the system class}
  The class of \BSLSS is the simplest and perhaps the best studied
  class of hybrid systems. In addition to its practical relevance,
  it also serves as a convenient starting point for theoretical
  investigations. In particular, any piecewise-affine hybrid 
  system can be viewed as a feedback interconnection of a 
  \LSS\ with an event generating device. Hence, identification
  of a piecewise-affine system is related to the problem of
  closed-loop identification of a \LSS. For the latter, it 
  is indispensable to have a good notion of persistence of excitation.
  For this reason, we believe that the results of the paper will be
  relevant not only for identification of \SLSS, but also for
  identification of piecewise-affine hybrid systems with
  autonomous switching.

\textbf{Related work}
  Identification of hybrid systems is an active research
  area, with several significant contributions
  \cite{JLSVidal,MaVidal,Vidal2,Vidal1,IdentSurvey,VidalAutomatica,IdentComparison,PLClustering,Juloski1,Juloski2,Vidal2,Bako09-SYSID2,Bako08-IJC,Roll:Automatica04,Fox:09,Verdult04,Paoletti2,Bako10}.
  While enormous progress was made in terms of efficient
  identification algorithms, the fundamental theoretical
  limitations and properties of these algorithms are still
  only partially understood.
  Persistence of excitation of
  hybrid systems were already addressed in
  \cite{JLSVidal,VidalAutomatica,VidalObs,Verdult04,Hiskens1}.
  However, the conditions of those papers
   are more method-specific and their approach is
   quite different from the one we propose. 
   For linear systems, persistence of excitation has thouroughly
   been investigated, see for example \cite{LjungBook,Willems2005325} 
   and the references therein.

\textbf{Outline of the paper}
  \S \ref{sect:switch} presents the formal definition 
  of \BSLSS and it formulates the major system-theoretic 
  concepts for this system class.
 \S \ref{sect:real} presents a brief overview of
 realization theory for \SLSS.
 \S \ref{sect:pers} presents the main contribution of
 the paper.

\textbf{Notation}
  Denote by $\mathbb{N}$ the set of natural numbers including $0$.
 The notation described below is standard in automata theory, see \cite{GecsPeak,AutoEilen}.
 Consider a set $X$ which will be called the \emph{alphabet}.
 Denote by $X^{*}$ the set of finite
 sequences of elements of $X$.
 Finite sequences of elements of
 $X$ are  referred to as \emph{strings} or \emph{words} over $X$.
 Each non-empty word $w$ is of the form $w=a_{1}a_{2} \cdots a_{k}$
 for some $a_1,a_2,\ldots,a_k \in X$.
 The element $a_i$ is called the \emph{$i$th letter of $w$}, for
 $i=1,\ldots,k$ and $k$ is called the \emph{length of $w$}.
 We denote by $\epsilon$ the \emph{empty sequence (word)}.
 The length of word $w$ is denoted by $|w|$; note that $|\epsilon|=0$.
 We denote by $X^{+}$ the set
 of non-empty words, i.e.
 $X^{+}=X^{*}\setminus \{\epsilon\}$.
 We denote by $wv$ the concatenation of word $w \in X^{*}$ with $v \in X^{*}$.
For each $j=1,\ldots,m$, $e_j$ is the $j$th unit vector of $\mathbb{R}^{m}$, i.e.
  $e_j=(\delta_{1,j},\ldots, \delta_{n,j})$,
  $\delta_{i,j}$ is the Kronecker symbol. 

\section{Linear switched systems}
\label{sect:switch}
 In this section we present the formal definition of \BSLSS along with a number of relevant system-theoretic concepts for \BSLSS. 
 \begin{Definition}
  \label{switch:def}
  Recall from \cite{MP:HSCC2010} that a discrete-time
  linear switched system (abbreviated by \LSS), is a discrete-time
  control system of the form
  \begin{equation}
  \label{lin_switch0}
  \Sigma\left\{
  \begin{array}{lcl}
   x_{t+1} &=& A_{q_t}x_t+B_{q_t}u_t \mbox{ and $x_0=0$}  \\
   y_t  &=& C_{q_t}x_t. 
  \end{array}\right.
  \end{equation}
  Here $Q=\{1,\ldots,\QNUM\}$ is the finite set of discrete modes, 
  $\QNUM$ is a positive integer, 
 $q_t \in Q$ is the
  switching signal, $u_t \in \mathbb{R}$ is the continuous input,
  $y_t \in \mathbb{R}^{p}$ is the output and 
  $A_{q} \in \mathbb{R}^{n \times n}$,
  $B_{q} \in \mathbb{R}^{n \times m}$, $C_q \in \mathbb{R}^{p \times n}$
  are the matrices of the linear system in mode $q \in Q$.
\end{Definition}
 Throughout the section, \emph{$\Sigma$ denotes a \LSS\ of the form (\ref{lin_switch0}).}
The \emph{inputs of $\Sigma$ are the
continuous inputs $\{u_t\}_{t=0}^{\infty}$ and the
switching signal $\{q_t\}_{t=0}^{\infty}$.} The state of the system
at time $t$ is $x_t$.
Note \emph{that any switching
signal is admissible and that the initial state is assumed
to be zero}.
We use the following notation for the inputs of $\Sigma$.
\begin{Notation}[Hybrid inputs]
 Denote $\HYBINP=Q \times \mathbb{R}^{m}$.
\end{Notation}
 We denote by  $\HYBINP^{*}$ (resp. $\HYBINP^{+}$) 
 the set of all 
  finite 
 (resp. non-empty and finite)
sequences
 of elements of $\HYBINP$.
 A sequence 
 \begin{equation}
 \label{inp_seq}
 w=(q_0,u_0)\cdots (q_t,u_t) \in \HYBINP^{+} \mbox{, } t \ge 0 
 \end{equation}
  describes the scenario, when the discrete mode $q_i$ and
  the continuous input $u_i$ are fed to $\Sigma$ at
 time $i$, for $i=0,\ldots,t$.
    \begin{Definition}[State and output]
     Consider a state $x_{init} \in \mathbb{R}^{n}$.
     For any $w \in \HYBINP^{+}$ of the form (\ref{inp_seq}),
     denote by \( x_{\Sigma}(x_{init},w) \)
     \emph{the state of $\Sigma$}
     at time $t+1$, and denote by
     $y_{\Sigma}(x_{init},w)$ the \emph{output} of $\Sigma$
     at time $t$, if $\Sigma$ is started from $x_{init}$ and 
     the inputs $\{u_i\}_{i=0}^t$ and
     the discrete modes $\{q_i\}_{i=0}^{t}$ are fed to the system. 
  \end{Definition}    
     That is,
     $x_{\Sigma}(x_{init},w)$ is defined recursively as follows;
     $x_{\Sigma}(x_{init},\epsilon)=x_{init}$,  and if $w=v(q,u)$
     for some $(q,u) \in \HYBINP$, $v \in \HYBINP^{*}$, then
     \[ x_{\Sigma}(x_{init},w)=A_{q}x_{\Sigma}(x_{init},v)+B_{q}u. \]
     If $w \in \HYBINP^{+}$ and
      $w=v(q,u)$, $(q,u) \in \HYBINP$, $v \in \HYBINP^{*}$,
      then 
      \[ y_{\Sigma}(x_{init},w)=C_{q}x_{\Sigma}(x_{init},v). \]
  \begin{Definition}[Input-output map]
   The map  $y_{\Sigma}: \HYBINP^{+} \rightarrow \mathbb{R}^{p}$,
   $\forall w \in \HYBINP^{+}: y_{\Sigma}(w)=y(x_0,w)$, is called
   the input-output map of $\Sigma$.
  \end{Definition}   
   That is, the input-output map of $\Sigma$ maps each
   sequence $w \in \HYBINP^{+}$ to the output generated
   by $\Sigma$ under the hybrid input $w$, if started from the 
    zero initial state.
   The definition above implies that the input-output behavior
   of a \BLSS can be formalized as a map
   \begin{equation}
   \label{io_map}
     f:\HYBINP^{+} \rightarrow \mathbb{R}^{p}. 
   \end{equation}
   The value $f(w)$ for $w$ of the form \eqref{inp_seq} represents the output 
   of the underlying black-box system at time $t$, 
   if the continuous inputs $\{u_i\}_{i=0}^{t}$ and the switching sequence 
   $\{q_i\}_{i=0}^t$ are fed to the system.

   Next, we define
   when a general map $f$ of the form \eqref{io_map} 
   is adequately described by the
   \BLSS $\Sigma$, i.e. when $\Sigma$ is a realization of $f$.
   \begin{Definition}[Realization]
   \label{switch_sys:real:def1}
    The \BLSS $\Sigma$ is a \emph{realization} of
    an input-output map $f$ of the form \eqref{io_map}, if
   $f$ equals the input-output map of $\Sigma$, i.e. $f=y_{\Sigma}$.
   \end{Definition}
   For the notions of observability and span-reachability of
  \SLSS\ we refer the reader to \cite{MPLBJH:Real,Sun:Book}.
   \begin{Definition}[Dimension]
   \label{switch_sys:dim:def}
    The dimension of $\Sigma$, denoted by $\dim \Sigma$, is 
    the dimension $n$ of its state-space.
   \end{Definition}
\begin{Definition}[Minimality]
 Let 
 $f$ be an
 input-output map.
 Then $\Sigma$ is \emph{a minimal realization of $f$}, if
 $\Sigma$ is a realization of $f$, and
 for any \LSS\ 
 $\hat{\Sigma}$ which is a realization 
 of $f$, $\dim \Sigma \le \dim \hat{\Sigma}$.
\end{Definition}

 

\section{Overview of realization theory}
\label{sect:real}
 Below we present an overview of the 
 results on realization theory
 of \SLSS\ along with the concept of Markov-parameters.
 For more details on the topic see \cite{MPLBJH:Real}.
In the sequel, \emph{$\Sigma$ denotes a \LSS\ of the form
(\ref{lin_switch0}), and $f$ denotes an input-output map
$f:\HYBINP^{+} \rightarrow \mathbb{R}^{p}$.}

For our purposes the most important result is the one
which states that a \LSS\ realization of $f$ can be
computed from the Markov-parameters of $f$. In order to
present this result, we need to define the Markov-paramaters
of $f$ formally.
 Denote $Q^{k,*}=\{ w \in Q^{*} \mid |w| \ge k\}$.
 Define the maps 
 $S_j^f:Q^{2,*} \rightarrow \mathbb{R}^{p}$, $j=1,\ldots,m$
  as follows; for any $v=\sigma_1\ldots\sigma_{|v|} \in Q^{*}$ with $\sigma_k\in Q$, and for any $q,q_0\in Q$,
\begin{equation}
\label{eq:MarkovParam}
	 S^f_{j}(q_0vq)=
	\left\{\begin{aligned}
	  & f\big((q_0,e_j)(q,0)\big) \mbox{ if } v=\epsilon\\
		&f\big((q_0,e_j) (\sigma_1,0) \ldots (\sigma_{|v|},0) (q,0)\big) \mbox{ if } |v|\geq 1
	\end{aligned}\right.
	\end{equation}
with $e_j\in \Re^m$ is the vector with $1$ as its $j$th entry and zero everywhere else. The collection of maps $\{S_j^f\}_{j=1}^{m}$ is 
 called the \emph{Markov-parameters} of $f$.\\
The functions $S_j^f$, $j=1,\ldots,m$ can be viewed as
\emph{input responses}. The interpretation of $S_j^f$
will become more clear 
 after we define
the concept of a \emph{generalized convolution representation}.
Note that the values of the Markov-parameters can be obtained 
from the values of $f$.
 \begin{Notation}[Sub-word]
\label{not:subword}
  Consider the sequence $v=q_0\cdots q_t \in Q^{+}$,
  $q_0,\ldots, q_t \in Q$, $t \ge 0$.
 For each $j,k \in \{0,\ldots,t\}$, define
 the word $v_{j|k} \in Q^{*}$ as follows; 
 if $j> k$, then $v_{j|k}=\epsilon$, if $j=k$, then
 $v_{j|j}=q_j$ and if
 $j < k$, then $v_{j|k}=q_jq_{j+1}\cdots q_k$.
 That is, $v_{j|k}$ is the sub-word of $v$ formed by the letters
 from the $j$th to the $k$th letter.
 \end{Notation}
 \begin{Definition}[Convolution representation]
 \label{sect:io:def1}
  The input-output map $f$ has a \emph{generalized convolution
  representation (abbreviated as \GCR)}, if for all 
  $w \in \HYBINP^{+}$ of the form \eqref{inp_seq},
  $f(w)$ can be expressed via the Markov-parameters of $f$ as follows.
  \begin{equation*}
    \begin{split}
     & f(w)= 
      \sum_{k=0}^{t-1} S^f(q_k \cdot v_{k+1|t-1} \cdot q_t)u_k
    \end{split}
  \end{equation*}
where $S^f(w)=\big[\begin{matrix}S_1^f(w) & \ldots & S_m^f(w) \end{matrix}\big] \in \Re^{p\times m}$ for all $w \in Q^{*}$.
 \end{Definition}
 \begin{Remark}
\label{rem:Unicity-of-from-MP}
  If $f$ has a \GCR, then
  the Markov-parameters of $f$ determine $f$ uniquely.
 \end{Remark}
  The motivation for introducing \GCR{s} is that existence of a \GCR\ is
  a necessary condition for realizability by \SLSS. 
  Moreover, if $f$ is realizable by a \LSS, then  
  the Markov-parameters of $f$ can be expressed as 
  products of the matrices of its \LSS\ realization.
  In order to formulate this result more precisely, we need the
  following notation.
 \begin{Notation}
 \label{repr:not1}
   Consider the collection of $n \times n$ matrices
   $A_{\sigma}$, $\sigma \in X$. For any $w \in Q^{*}$,
   the $n \times n$ matrix $A_w$ is defined as follows.
   If $w=\epsilon$, then $A_{\epsilon}$ is the identity matrix.
   If  $w=\sigma_1\sigma_2 \cdots \sigma_{k} \in X^{*}$, 
   $\sigma_1, \cdots \sigma_k \in X$, $k > 0$, 
  then 
  \begin{equation} 
  \label{repr:eq2}
  A_{w}=A_{\sigma_{k}}A_{\sigma_{k-1}} \cdots A_{\sigma_{1}}. 
  \end{equation}
\end{Notation}
 \begin{Lemma}
 \label{sect:io:lemma1}
  The map $f$ is  realized by the
  \LSS\  $\Sigma$ if and only if $f$ has a \GCR\ and 
  for all $v \in Q^{*}$, $q,q_0 \in Q$,
  \begin{equation}
  \label{sect:io:lemma1:eq1}
	\begin{split}
     & S^f_{j}(q_0vq)=C_qA_vB_{q_0}e_j, \: j=1,\ldots,m. 
      \end{split}
  \end{equation}
 \end{Lemma}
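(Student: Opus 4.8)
The plan is to reduce both directions to a single closed-form computation of the output of $\Sigma$. First I would unroll the state recursion $x_{t+1}=A_{q_t}x_t+B_{q_t}u_t$ with $x_0=0$; by induction on $t$ one obtains, for $w=(q_0,u_0)\cdots(q_t,u_t)$ of the form \eqref{inp_seq}, the explicit expression $x_t=\sum_{k=0}^{t-1} A_{q_{t-1}}A_{q_{t-2}}\cdots A_{q_{k+1}}B_{q_k}u_k$ and hence $y_{\Sigma}(w)=C_{q_t}x_t=\sum_{k=0}^{t-1} C_{q_t}A_{q_{t-1}}\cdots A_{q_{k+1}}B_{q_k}u_k$. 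By the matrix-product convention of Notation \ref{repr:not1}, the reversed product $A_{q_{t-1}}\cdots A_{q_{k+1}}$ is precisely $A_{v_{k+1|t-1}}$, where $v=q_0\cdots q_t$. Next I would compute the Markov-parameters of $y_\Sigma$ straight from definition \eqref{eq:MarkovParam}: feeding the input $(q_0,e_j)(\sigma_1,0)\cdots(\sigma_{|v|},0)(q,0)$ into the closed form, every summand with a zero continuous input vanishes and only the $k=0$ term survives, giving $S_j^{y_\Sigma}(q_0vq)=C_qA_vB_{q_0}e_j$ (the case $v=\epsilon$ uses $A_\epsilon=I$). Substituting this identity back into the closed form then rewrites $y_\Sigma(w)$ as $\sum_{k=0}^{t-1} S^{y_\Sigma}(q_k\cdot v_{k+1|t-1}\cdot q_t)u_k$, which is exactly the \GCR\ formula of Definition \ref{sect:io:def1}. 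Thus $y_\Sigma$ has a \GCR\ and its Markov-parameters satisfy \eqref{sect:io:lemma1:eq1}; this is the single fact from which both implications follow.

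The two directions are then immediate. For the ``only if'' part, if $f=y_\Sigma$ then $f$ inherits verbatim both the \GCR\ and the Markov-parameter identity just established, which is the claim. For the ``if'' part, assume $f$ has a \GCR\ and that \eqref{sect:io:lemma1:eq1} holds; combining the hypothesis with the computation above shows that $f$ and $y_\Sigma$ have identical Markov-parameters, and since both maps possess a \GCR, Remark \ref{rem:Unicity-of-from-MP} guarantees that each is determined uniquely by its Markov-parameters, forcing $f=y_\Sigma$, i.e.\ $\Sigma$ realizes $f$.

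The main obstacle is purely one of bookkeeping rather than of ideas: getting the index ranges right and, in particular, matching the \emph{reversed} order of the matrix product $A_{q_{t-1}}\cdots A_{q_{k+1}}$ to the convention $A_w=A_{\sigma_k}\cdots A_{\sigma_1}$ of Notation \ref{repr:not1}, together with correctly identifying the middle block as the sub-word $v_{k+1|t-1}$. Once the indexing is pinned down, the induction establishing the closed form and the collapse of the sum to its $k=0$ term in the Markov-parameter computation are routine.
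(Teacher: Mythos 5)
Your proof is correct, and the paper itself offers no proof to compare against: Lemma \ref{sect:io:lemma1} is stated as background and deferred to the cited realization-theory report \cite{MPLBJH:Real}. Your argument --- unrolling the state recursion to the closed form $y_{\Sigma}(w)=\sum_{k=0}^{t-1}C_{q_t}A_{v_{k+1|t-1}}B_{q_k}u_k$, evaluating it on the impulse-type inputs of \eqref{eq:MarkovParam} so that only the $k=0$ term survives, and then invoking the uniqueness of a map with a \GCR\ given its Markov-parameters (Remark \ref{rem:Unicity-of-from-MP}) for the converse --- is exactly the standard proof, with the indexing and the reversed product convention of Notation \ref{repr:not1} handled correctly.
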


Next, we define the concept of a Hankel-matrix. 
Similarly to the linear case, the entries of
the Hankel-matrix are formed by the Markov parameters. 
For the definition of the Hankel-matrix of $f$,
we will use lexicographical ordering on the
set of sequences $Q^{*}$.
\begin{Remark}[Lexicographic ordering]
\label{rem:lex}
 Recall that $Q=\{1,\ldots,\QNUM\}$. We define a lexicographic
 ordering $\prec$ on $Q^{*}$ as follows.
 For any $v,s \in Q^{*}$, $v \prec s$ if either
 $|v| < |s|$ or $0 < |v|=|s|$, $v \ne s$ and for some $l \in \{1,\ldots, |s|\}$,
 $v_l < s_l$ with the usual ordering of integers and
 $v_i=s_i$ for $i=1,\ldots, l-1$. Here $v_i$ and $s_i$ denote the $i$th letter
 of $v$ and $s$ respectively.
 Note 
 that $\prec$ is a complete ordering and
 $Q^{*}=\{v_1,v_2,\ldots \}$ with $v_1 \prec v_2 \prec \ldots $.
 Note that $v_1 =\epsilon$ and for all $i \in \mathbb{N}$, $q \in Q$,
 $v_i \prec v_iq$.
\end{Remark}
In order to simplify the definition of a Hankel-matrix,
we introduce the notion of a combined Markov-parameter.
\begin{Definition}[Combined Markov-parameters]
 A combined Markov-parameter $M^f(v)$ of $f$ indexed by 
 the word
 $v \in Q^{*}$ is the following
 $p\QNUM \times \QNUM m$ matrix 
 \begin{equation}
   \label{main_results:lin:arb:pow2}
     M^f(v) = \begin{bmatrix}
              S^f(1v1), & \cdots, & S^f(\QNUM v1) \\
              S^f(1v2), & \cdots, & S^f(\QNUM v2) \\
              \vdots     & \cdots & \vdots \\
              S^f(1v\QNUM), & \cdots, & S^f(\QNUM v \QNUM)
             \end{bmatrix}
\end{equation}
\end{Definition}
\begin{Definition}[Hankel-matrix] 
\label{main_result:lin:hank:arb:def}
 Consider the lexicographic ordering $\prec$ of $Q^{*}$ from
 Remark \ref{rem:lex}.
 Define the Hankel-matrix $H_f$ of $f$ as the following
 infinite matrix
 \[ H_f = \begin{bmatrix}
     M^f(v_1v_1) & M^f(v_2v_1) & \cdots & M^f(v_kv_1) & \cdots \\
     M^f(v_1v_2) & M^f(v_2v_2) & \cdots & M^f(v_{k}v_2) & \cdots \\
     M^f(v_1v_3)
     & M^f(v_2v_3) & \cdots & M^f(v_{k}v_3) & \cdots \\
     \vdots   
      & \vdots   & \cdots & \vdots & \cdots 
   \end{bmatrix},
 \]
i.e. the $p\QNUM \times (m\QNUM)$ 
block of $H_f$    
in the block row $i$ and block column $j$
equals the combined Markov-parameter $M^f(v_jv_i)$ of $f$.
The rank of $H_f$, denoted by $\Rank H_f$, 
is the dimension of the linear span of its columns.
\end{Definition}
The main result on realization theory of
\SLSS\ can be stated as follows.
  \begin{Theorem}[\cite{MPLBJH:Real}]
  \label{sect:real:theo2}
   \begin{enumerate}
   \item
      The map $f$ has a realization by  a \LSS\  if and
      only if $f$ has a \GCR\ 
      and 
     $\Rank H_{f} < +\infty$. 
  \item
  A minimal \LSS\ realization of $f$
     can be constructed from $H_{f}$
     and any minimal \LSS\ 
     realization of $f$ has dimension $\Rank H_f$.
  \item
     A \LSS\  $\Sigma$ is a minimal realization of $f$ if 
     and only if $\Sigma$ is span-reachable, observable and
    it is a realization of $f$. Any two \SLSS\ which are
    minimal realizations of $f$ are isomorphic \footnote{see
    \cite{MPLBJH:Real} for the definition of isomorphism between
    \SLSS}.
  \end{enumerate}
\end{Theorem}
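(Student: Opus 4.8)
The plan is to mimic the classical Kalman--Ho realization theory for linear systems, with the single state-transition matrix replaced by the products $A_w$ over words $w \in Q^{*}$ from Notation~\ref{repr:not1}. The backbone of the whole argument is a rank factorization of the Hankel-matrix $H_f$ into an extended observability matrix $\mathcal{O}$ and an extended span-reachability matrix $\mathcal{R}$; all three parts of the theorem will then be read off from properties of this factorization, exactly as minimality, the dimension formula and the state-space isomorphism are read off the Hankel factorization in the linear theory.

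For the necessity direction of part (1), suppose $f$ is realized by $\Sigma$. Lemma~\ref{sect:io:lemma1} already gives both that $f$ has a \GCR\ and that $S_j^f(q_0 v q) = C_q A_v B_{q_0} e_j$. I would define $\mathcal{O}$ to have block rows indexed by pairs $(v_i,q)\in Q^{*}\times Q$ ordered as in Remark~\ref{rem:lex}, with block $C_q A_{v_i}$, and $\mathcal{R}$ to have block columns indexed by triples $(v_k,q_0,j)$, with block $A_{v_k} B_{q_0} e_j$. Using $A_{v_k v_i}=A_{v_i}A_{v_k}$ and substituting the product formula into the combined Markov-parameter (\ref{main_results:lin:arb:pow2}) and the Hankel-matrix of Definition~\ref{main_result:lin:hank:arb:def} yields $H_f = \mathcal{O}\mathcal{R}$. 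Hence $\Rank H_f \le n = \dim\Sigma < +\infty$, which is the necessity half of (1) and simultaneously the lower bound $\dim\Sigma \ge \Rank H_f$ needed in part (2).

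For the sufficiency direction of (1) and the construction in (2), I would start from $f$ with a \GCR\ and $\Rank H_f = n < +\infty$, and choose a full-rank factorization $H_f = \mathcal{O}\mathcal{R}$ with $\mathcal{O}$ having $n$ columns and $\mathcal{R}$ having $n$ (independent) rows. The key step is, for each $\sigma \in Q$, to form the shifted reachability factor $\mathcal{R}^{(\sigma)}$ whose $(v_k,q_0,j)$-block is $A_{v_k\sigma}B_{q_0}e_j$; since the associated shifted Hankel blocks $M^f(v_k\sigma v_i)$ are simply a re-indexing of columns already present in $H_f$, finiteness of the rank forces their span to lie in the column space of $\mathcal{O}$, and full row rank of $\mathcal{R}$ lets me define $A_\sigma := \mathcal{R}^{(\sigma)}\mathcal{R}^{+}$. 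I would then read $B_q$ and $C_q$ off the blocks of $\mathcal{R}$ and $\mathcal{O}$ attached to the empty word $v_1=\epsilon$, set $x_0=0$, and verify by induction on $|v|$ that the resulting \BLSS $\Sigma$ satisfies $S_j^{y_\Sigma}(q_0 v q) = C_q A_v B_{q_0} e_j$. Thus $\Sigma$ and $f$ share the same Markov-parameters, and since $f$ has a \GCR, Remark~\ref{rem:Unicity-of-from-MP} forces $y_\Sigma = f$. By construction $\dim\Sigma = n = \Rank H_f$, so together with the lower bound this realization is minimal, completing (2).

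Finally, for part (3) I would argue that a realization $\Sigma$ is minimal iff both factors in $H_f=\mathcal{O}\mathcal{R}$ have rank $n=\dim\Sigma$, which is exactly injectivity of $\mathcal{O}$ (observability) and surjectivity of $\mathcal{R}$ (span-reachability) in the sense of \cite{MPLBJH:Real,Sun:Book}. For the isomorphism claim, given two minimal realizations with factorizations $H_f=\mathcal{O}_1\mathcal{R}_1=\mathcal{O}_2\mathcal{R}_2$, the full column/row ranks let me set $T=\mathcal{R}_1\mathcal{R}_2^{+}$ and check, using the shift-structure of $H_f$, that $T$ is invertible and intertwines the triples $(A_q,B_q,C_q)$ of the two systems. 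The main obstacle I expect is precisely the well-definedness and shift-invariance of the operators $A_\sigma$ in the word-indexed setting: unlike the single-matrix linear case, the columns are indexed by the free monoid $Q^{*}$, so one must argue that appending a letter respects the finitely many linear dependencies among the independent columns and that a finite sub-Hankel-matrix already captures all of them. Once this invariance is in hand, the remaining work — matching block indices through the lexicographic order and the inductive verification of the Markov-parameters — is routine bookkeeping.
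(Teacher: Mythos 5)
Your proposal is correct and follows essentially the same route as the source: the paper itself does not prove Theorem~\ref{sect:real:theo2} but imports it from \cite{MPLBJH:Real}, whose argument is precisely the Kalman--Ho-style rank factorization $H_f=\mathcal{O}\mathcal{R}$ into extended observability and span-reachability matrices, with letter-indexed shift operators yielding the $A_\sigma$ and an intertwining state-space map giving uniqueness of minimal realizations up to isomorphism. You also correctly isolate the only genuinely new point relative to the classical linear case---well-definedness of the shifts on the word-indexed column space (linear dependencies among columns of $H_f$ are preserved under appending a letter, since the shifted columns merely re-index rows)---so the sketch is sound, modulo the routine caveat that the pseudoinverse of the infinite matrix $\mathcal{R}$ should be realized through a choice of finitely many basis columns, as in Algorithm~\ref{alg0}.
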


Note that Theorem \ref{sect:real:theo2}
 shows that the knowledge of
the Markov-parameters is necessary and sufficient for
finding a state-space representation of $f$. In fact,
similarly to the continuous-time case \cite{MP:PartTechRep}, 
we can even show that the knowledge of \emph{finitely many}  
Markov-parameters is sufficient.  This will be done 
by formulating a \emph{realization algorithm}
for \SLSS, which computes a \SLSS\ realization of $f$
based on finitely many Markov-parameters of $f$. 

In order to present the realization algorithm, we need the 
following notation.

 \begin{Notation}
  Consider the lexicographic ordering $\prec$ of $Q^{*}$
  and recall that $Q^{*}=\{v_1,v_2,\ldots, \}$ where
  $v_1 \prec v_2 \cdots $. Denote by $\mathbf{N}(L)$
  the number of sequences from $Q^{*}$ of length 
  at most $L$. It then follows that 
  $|v_i| \le L$ if and only if $i \le \mathbf{N}(L)$.
 \end{Notation}
 \begin{Definition}[$H_{f,L,M}$ sub-matrices of $H_{f}$]
 \label{sect:main_result:sub-hank:def1}
  For $L,K \in \mathbb{N}$ 
  define the integers 
  $I_L=\mathbf{N}(L)p\QNUM$ and $J_K=\mathbf{N}(K)m\QNUM$ 
  Denote by $H_{f,L,K}$ the following upper-left $I_L \times J_K$ 
  sub-matrix of $H_{f}$, 
 \[ 
     \begin{bmatrix}
     M^f(v_1v_1) & M^f(v_2v_1) & \cdots & M^f(v_{\mathbf{N}(K)}v_1) \\
     M^f(v_1v_2) & M^f(v_2v_2) & \cdots & M^f(v_{\mathbf{N}(K)}v_2) \\
     \vdots   
      & \vdots   & \cdots & \vdots & \\
     M^f(v_1v_{\mathbf{N}(L)}) & M^f(v_2v_{\mathbf{N}(L)}) & \cdots & M^f(v_{\mathbf{N}(K)}v_{\mathbf{N}(L)}) 
   \end{bmatrix}.
 \]
 \end{Definition}
   Notice that the entries of $H_{f,L,K}$ are
   Markov-parameters indexed by words of length at most $L+K$, i.e.
   $H_{f,L,K}$ is uniquely determined by $\{M^f(v_i)\}_{i=1}^{\mathbf{N}(L+K)}$.
The promised realization algorithm is Algorithm \ref{alg0},
which takes as input the matrix $H_{f,N,N+1}$ and produces a 
\LSS. Note that the knowledge of $H_{f,N,N+1}$ is equivalent
to the knowledge of the 
finite sequence $\{M^f(v_i)\}_{i=1}^{\mathbf{N}(2N+1)}$ of
Markov-parameters. The correctness of Algorithm \ref{alg0}
is stated below.

  \begin{Theorem}
  \label{part_real_lin:theo1}
  If $\Rank H_{f,N,N}=\Rank H_{f}$, then 
  Algorithm \ref{alg0} returns 
  a minimal realization $\Sigma_N$ of $f$.
  The condition $\Rank H_{f,N,N}=\Rank H_{f}$ holds for a given $N$, if
  there exists a \LSS\  realization $\Sigma$ of $f$ such that
  $\dim \Sigma \le N+1$.
\end{Theorem}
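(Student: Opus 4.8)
The plan is to exploit the factorization of the Hankel matrix induced by any realization, and to show that under the rank hypothesis the finite window $H_{f,N,N+1}$ already contains all the data needed to reconstruct a system whose Markov-parameters coincide with those of $f$. Throughout I assume, as is implicit for realizability, that $f$ has a \GCR.

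First I would record the underlying factorization. By Lemma \ref{sect:io:lemma1}, if a \LSS\ $\Sigma$ of dimension $n$ realizes $f$, then the $(q,q_0)$ block of $M^f(v)$ is $C_qA_vB_{q_0}$, and since $A_{v_jv_i}=A_{v_i}A_{v_j}$ by Notation \ref{repr:not1}, the block of $H_f$ in block-row $(v_i,q)$ and block-column $(v_j,q_0)$ equals $C_qA_{v_i}A_{v_j}B_{q_0}$. Hence $H_f=\mathcal{O}\mathcal{R}$, where $\mathcal{O}$ has block-rows $C_qA_{v_i}$ and $\mathcal{R}$ has block-columns $A_{v_j}B_{q_0}$, i.e. the infinite observability and span-reachability matrices of $\Sigma$. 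The same identity holds for truncations, $H_{f,L,K}=\mathcal{O}_L\mathcal{R}_K$, where $\mathcal{O}_L$ keeps the block-rows with $|v_i|\le L$ and $\mathcal{R}_K$ the block-columns with $|v_j|\le K$.

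For the second assertion I would use this factorization with a minimal realization. If $f$ has a realization of dimension $\le N+1$, then by Theorem \ref{sect:real:theo2} it has a minimal, span-reachable, observable realization $\Sigma$ of dimension $n=\Rank H_f\le N+1$. The span-reachability subspaces $W_k=\SPAN\{A_wB_{q_0}e_l : |w|\le k,\ q_0\in Q\}$ form an increasing chain with $W_{k+1}=W_k+\sum_{q\in Q}A_qW_k$; once it stalls it stays constant, so it reaches its limit $\mathbb{R}^n$ after at most $n-1$ strict increases, giving $W_{n-1}=\mathbb{R}^n$. Since $n-1\le N$, the truncation $\mathcal{R}_N$ has rank $n$, and dually $\mathcal{O}_N$ has rank $n$; therefore $\Rank H_{f,N,N}=\Rank(\mathcal{O}_N\mathcal{R}_N)=n=\Rank H_f$. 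For the first assertion, under $\Rank H_{f,N,N}=\Rank H_f=n$ a full-rank factorization $H_{f,N,N}=\mathcal{O}\mathcal{R}$ exists with $\mathcal{O}$ of full column rank $n$ and $\mathcal{R}$ of full row rank $n$; the algorithm reads $C_q$ off the $(\epsilon,q)$ block-row of $\mathcal{O}$, $B_{q_0}$ off the $(\epsilon,q_0)$ block-column of $\mathcal{R}$, and defines $A_q$ as the unique solution of $A_q\mathcal{R}=\mathcal{R}^{(q)}$, where $\mathcal{R}^{(q)}$ collects the columns of $\mathcal{R}$ re-indexed by appending $q$ (these are exactly the columns supplied by the extra block of $H_{f,N,N+1}$, and solvability follows from $\mathcal{R}$ having full row rank). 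Since $\dim\Sigma_N=n=\Rank H_f$, once $\Sigma_N$ is shown to realize $f$ it is automatically minimal by Theorem \ref{sect:real:theo2}; and by the \GCR\ assumption together with Lemma \ref{sect:io:lemma1} realization reduces to verifying $C_qA_vB_{q_0}e_j=S^f_j(q_0vq)$ for all $v\in Q^*$, which I would prove by induction on $|v|$: the base case $v=\epsilon$ is the $(1,1)$ block identity $C_qB_{q_0}=S^f(q_0q)$ built into the factorization, and the inductive step is driven by the shift relation $A_q\mathcal{R}=\mathcal{R}^{(q)}$.

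The main obstacle is precisely this last induction. The construction forces the product formula on the words indexing $H_{f,N,N+1}$, but one must show that it persists for words of arbitrary length, i.e. that the shift operators extracted from a finite window close up consistently on the entire infinite Hankel matrix. This is exactly where $\Rank H_{f,N,N}=\Rank H_f$ is indispensable: it guarantees that the column space of the truncated $\mathcal{R}$ already equals that of the full span-reachability matrix, so the relation $A_q\mathcal{R}=\mathcal{R}^{(q)}$ determines $A_q$ unambiguously and the Markov-parameter identity propagates beyond the window. Establishing this rank-stabilization and shift-invariance step carefully is the crux; the remaining bookkeeping is routine.
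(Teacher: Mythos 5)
Your proposal cannot be checked against a written-out argument in this paper, because the paper gives no proof of Theorem \ref{part_real_lin:theo1} at all: it only remarks that the proof is ``completely analogous'' to the continuous-time partial-realization result of \cite{MP:PartTechRep}. Measured against that reference's standard argument, your route is essentially the same one: factor the Hankel matrix through observability and span-reachability matrices of a realization, prove stabilization of the reachability/observability chains within $n-1$ steps to get the second assertion (this half of your argument is complete and correct, including the orientation $A_{v_jv_i}=A_{v_i}A_{v_j}$ under the convention \eqref{repr:eq2}, and your correct observation that the theorem implicitly presupposes that $f$ has a \GCR). Also, your glossed point about solvability of $A_q\bar{\mathbf{R}}=\mathbf{R}_q$ is exactly covered by the hypothesis via the sandwich $\Rank H_{f,N,N}\le \Rank H_{f,N,N+1}\le \Rank H_f$, which forces the first $J_N$ columns of $\mathbf{R}$ to already have full row rank $n$.

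The one place you stop short is the step you yourself flag as the crux: that the identity $C_qA_vB_{q_0}e_j=S^f_j(q_0vq)$ propagates to words of arbitrary length. As written this is a gap, but it is closable with your own ingredients, and more cheaply than by the induction-plus-shift-invariance argument you sketch. Since $\Rank H_{f,N,N}=\Rank H_f=n<\infty$ and $f$ has a \GCR, Theorem \ref{sect:real:theo2} supplies a minimal realization $\Sigma'$ of dimension $n$, whose truncated factors satisfy $H_{f,N,N+1}=\mathcal{O}'_N\mathcal{R}'_{N+1}$ with $\mathcal{O}'_N$ of full column rank $n$ and $\mathcal{R}'_N$ (hence $\mathcal{R}'_{N+1}$) of full row rank $n$, by your own chain-stabilization argument applied to $\Sigma'$. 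Comparing with the algorithm's rank-$n$ factorization $H_{f,N,N+1}=\mathbf{O}\mathbf{R}$ yields an invertible $T$ with $\mathbf{O}=\mathcal{O}'_NT$ and $\mathbf{R}=T^{-1}\mathcal{R}'_{N+1}$. Reading off blocks gives $C_q=C'_qT$ and $B_q=T^{-1}B'_q$; moreover the true shift relation gives $\mathbf{R}_q=T^{-1}A'_q\mathcal{R}'_N$ and $\bar{\mathbf{R}}=T^{-1}\mathcal{R}'_N$, so
\begin{equation*}
A_q=\mathbf{R}_q\bar{\mathbf{R}}^{+}=T^{-1}A'_q\mathcal{R}'_N\bigl(T^{-1}\mathcal{R}'_N\bigr)^{+}=T^{-1}A'_qT\,\bar{\mathbf{R}}\bar{\mathbf{R}}^{+}=T^{-1}A'_qT,
\end{equation*}
using $\bar{\mathbf{R}}\bar{\mathbf{R}}^{+}=I_n$ (full row rank). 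Hence $\Sigma_N$ is isomorphic to $\Sigma'$, so it realizes $f$ and is minimal; the propagation of the Markov-parameter identity beyond the finite window is then automatic, and no separate well-definedness lemma for the shift on the infinite Hankel matrix is needed.
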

 The proof of Theorem \ref{part_real_lin:theo1} 
 is completely analogous to
 its continuous-time counterpart \cite{MP:PartTechRep}.
 \emph{Theorem \ref{part_real_lin:theo1} implies that if 
 $f$ is realizable by a \LSS, then 
  a minimal \LSS\  realization of
  $f$  is computable from 
 finitely many Markov-parameters}, using Algorithm \ref{alg0}.
 In fact, if $f$ is realizable by a \LSS\ of dimension $n$, 
 then the \emph{first $\mathbf{N}{(2n-1)}$ Markov-parameters
 $\{M^f(v_i)\}_{i=1}^{\mathbf{N}(2n-1)}$ uniquely 
 determines $f$}. 

   \begin{algorithm}
   \caption{
    \newline
    \textbf{Inputs:} Hankel-matrix $H_{f,N,N+1}$.  
    \newline
    \textbf{Output:} \LSS\ $\Sigma_N$
    }
    \label{alg0} 
    \begin{algorithmic}[1]
    \STATE
       Let $n=\Rank H_{f,N,N+1}$. Choose a tuple
       of integers $(i_1,\ldots,i_n)$ such that
       the columns of $H_{f,N,N+1}$ indexed by $i_1,\ldots,i_n$
       form a basis of $\IM H_{f,N,N+1}$.
       Let $\mathbf{O}$ be 
       $I_N \times n$ matrix formed by these
       linearly independent columns, 
       i.e. the $r$th column  of $\mathbf{O}$ equals
       the $i_r$th column of $H_{f,N,N+1}$.
        Let 
       $\mathbf{R} \in \mathbb{R}^{n \times J_{N+1}}$ be the 
       matrix, $r$th column of which is formed by 
       coordinates of the $r$th column of $H_{f,N,N+1}$
       with respect to the basis consisting of the
       columns $i_1,\ldots,i_n$ of $H_{f,N,N+1}$, for every
       $r=1,\ldots,J_{N+1}$. It then follows that
       $H_{f,N,N+1} = \mathbf{O}\mathbf{R}$ and
       $\Rank \mathbf{R} =\Rank \mathbf{O} = n$.
   \STATE
     Define $\bar{\mathbf{R}} \in \mathbb{R}^{n \times J_N}$ as the
     matrix formed by the first $J_N$ columns of $\mathbf{R}$.

  \STATE
     For each $q \in Q$, let $\mathbf{R}_q \in \mathbb{R}^{n \times J_N}$ 
     be such that for each $i=1,\ldots J_N$, the $i$th column of
     $\mathbf{R}_q$ equals the $r(i)$th column of $\mathbf{R}$. Here  
     $r(i) \in \{1,\ldots,J_{N+1}\}$ is defined as follows.
     Consider the decomposition $i=(r-1)m\QNUM+z$ 
     for some $z=1,\ldots,m\QNUM$ and
     $r=1,\ldots,\mathbf{N}(N)$.
     Consider
     the word $v_rq$ and notice that
    $|v_rq| \le N+1$. Hence, $v_rq=v_{d}$
     for some $d=1,\ldots,\mathbf{N}(N+1)$. Then define
     $r(i)$ as $r(i)=(d-1)m\QNUM+z$.

    \STATE
       
       Construct $\Sigma_N$ of the form (\ref{lin_switch0}) such that
          \begin{eqnarray}
              & & 
		\begin{bmatrix} B_1,\ldots, B_{\QNUM}
                \end{bmatrix} = \nonumber  \\
             & & \mbox{the first $m\QNUM$ columns of $\mathbf{R}$} 
          \label{alg0:eq-2} \\
             & &  \begin{bmatrix} C_1^T & C_2^T & \ldots & C_{\QNUM}^T 
\end{bmatrix}^T = \mbox{ the first $p\QNUM$ rows of $\mathbf{O}$} 
             \label{alg0:eq-1}  \\
	& & \forall q \in Q:  A_q=\mathbf{R}_{q}\mathbf{\bar{R}}^{+} 
         \label{alg0:eq1}
          \end{eqnarray}
      where $\mathbf{\bar{R}}^{+}$ is the Moore-Penrose pseudoinverse
      of $\mathbf{\bar{R}}$.
    \STATE
       Return $\Sigma_N$
    \end{algorithmic}
   \end{algorithm}
  The intuition behind Algorithm \ref{alg0}
  is the following. The state-space of the \LSS\ 
  $\Sigma_N$ returned by Algorithm \ref{alg0} is an
  isomorphic copy of the space spanned by the
  columns of $H_{f,N,N}$. The isomorphism is
  determined by the matrix $\mathbf{R}$. 
  The columns of $B_q$, $q \in Q$ are formed by the
  columns $(q-1)m\QNUM+1,\ldots, qm\QNUM$ of the block-matrix 
  \[ \begin{bmatrix} M^f(v_1v_1)^T & \ldots & M^f(v_1v_{\mathbf{N}(L)})^T \end{bmatrix}^T.\]
   The rows of $C_q$, $q \in Q$ 
   are formed by
   the rows $(q-1)p+1,\ldots,pq$ of $H_{f,N,N+1}$.
   Finally, the matrix $A_q$, $q \in Q$ is the matrix
   of a shift-like operator, which maps a block-column 
   $\left\{M^f(v_jv_i)\right\}_{i=1}^{\mathbf{N}(L)}$ of $H_{f,N,N}$ to
   the block-column 
   $\left\{M^f(v_jqv_i)\right\}_{i=1}^{\mathbf{N}(L)}$ of $H_{f,N,N+1}$.

\section{Main results of the paper}
\label{sect:pers}
  The main idea behind our definition of persistence
  of excitation is as follows. The measured time series
  is persistently exciting, if from this time-series
  we can reconstruct the Markov-parameters of the underlying
  system. Note that by Theorem \ref{part_real_lin:theo1}, it is enough to 
  reconstruct \emph{finitely many} Markov-parameters.
  This also means that our definition of persistence of
  excitation is also applicable to finite time series.

  In order to present our main results, we will need some
  terminology.
  \begin{Definition}[Output time-series]
    For any input-output map $f$ and for any
    finite input sequence $w \in \HYBINP^{+}$
   we denote by $\mathbf{O}(f,w)$
   the output time series induced by $f$ and $w$, i.e.
   if $w$ is of the form \eqref{inp_seq}, then 
   $\mathbf{O}(f,w)=\{y_t\}_{t=0}^{T}$, such that
   $y_t=f((q_0,u_0)\cdots (q_t,u_t))$ for all $t \le T$.
  \end{Definition}
  \begin{Definition}[Persistence of excitation]
   The finite sequence $w \in \HYBINP^{+}$ is 
   \emph{persistently exciting for the input-output map $f$}, if
   it is possible to determine 
   the Markov-parameters
   of $f$ from the data $(w,\mathbf{O}(f,w))$.
  \end{Definition}
  \begin{Remark}[Interpretation]
  \label{interp:rem1}
  Theorem \ref{part_real_lin:theo1} and Algorithm \ref{alg0} allow the following
  interpretation of persistence of excitation defined above.
  If $w$ is persistently exciting, then the Markov-parameters
  of $f$ can be computed from the response of $f$ to
  the prefixes of $w$. In particular,   
  if $f$ admits a \LSS\ realization of dimension at most
  $n$, then 
  the Markov-parameters $\{M^f(v_i)\}_{i=1}^{\mathbf{N}(2n-1)}$ 
  can be computed from the data $(w,\mathbf{O}(f,w))$. 
  The knowledge of $\{M^f(v_i)\}_{i=1}^{\mathbf{N}(2n-1)}$ is 
  sufficient for computing a \LSS\ realization of $f$.
  Hence, \emph{persistence of excitation of $w$
  for $f$ means that Algorithm \ref{alg0} can serve as an 
  identification algorithm for computing a \LSS\ realization
  of $f$ from the time-series $(w,\mathbf{O}(f,w))$}.
  Note, however, that our definition does not depend
  on Algorithm \ref{alg0}. Indeed, 
  if there is any algorithm which can correctly find a 
  \LSS\ realization of $f$ from $(w,\mathbf{O}(f,w))$, then
  according to our definition, $w$ is persistently exciting.
  Note that our definition of persistence of excitation involves
  only the inputs, but not the output response. 
  \end{Remark}

   So far we have defined the persistence of excitation for finite
   sequences of inputs. 
   Next, we define the same notion for infinite sequences
   of inputs. To this end, we need the following notation.
   \begin{Notation}
    We denote by $\HYBINP^{\omega}$ the set of infinite sequences
    of hybrid inputs. That is, any element $w \in \HYBINP^{\omega}$
    can be interpreted as a time-series
    $w=\left\{(q_t,u_t)\right\}_{t=0}^{\infty}$. For each $N \in \mathbb{N}$,
    denote by $w_N$ the sequence formed by the first $N$ elements
    of $w$, i.e. $w_N=(q_0,u_0)\cdots (q_N,u_N)$.
  \end{Notation}
  \begin{Definition}[Asymptotic persistence of excitation]
   An infinite sequence of inputs $w \in \HYBINP^{\omega}$ is called
   \emph{asymptotically persistently exciting for
    the input-output map $f$}, if the following holds. For 
    every sufficiently large $N$, we can compute
    from $(w_N, \mathbf{O}(f,w_N))$ asymptotic 
    estimates of the Markov-parameters of $f$.
    More precisely, for  $N \in \mathbb{N}$, we can compute from 
    $(w_N,\mathbf{O}(f,w_N))$ 
    some matrices
    $\{M^f_N(v)\}_{v \in Q^{*}}$ such that 
    $\lim_{N \rightarrow \infty} M^f_N(v)=M^f(v)$ for
    all $v \in Q^{*}$. 
    When clear from the context, we will use the term
    persistently exciting instead of asymptotically persistently
    exciting.
  \end{Definition}



\begin{Remark}[Interpretation]
\label{rem:interp2}
  The interpretation of asymptotic persistence of
   excitation is that \emph{asymptotically persistently
  exciting inputs allow us to estimate a \LSS\ realization
  of $f$ with arbitrary accuracy}. Indeed, assume that
  $w \in \HYBINP^{\omega}$ is
  asymptotically persistently exciting. Then for
  each $N$ we can compute from
  the time-series $(w_N,\mathbf{O}(f,w_N))$  an
  approximation $\{M_N^f(v)\}_{v \in Q^{*}}$
  of the Markov-parameters of $f$.
  Suppose that $f$ is realizable by a \LSS\ of dimension
  $n$ and we know the indices $(i_1,\ldots,i_n)$
  of those columns of $H_{f,n-1,n}$ which form a basis
  of the column space of $H_{f,n-1,n}$.
  Let $H^{N}_{f,n-1,n}$ be the matrix which is constructed
  in the same way as $H_{f,n-1,n}$, but with $M_N^f(v)$ instead
  of the Markov-parameters $M^f(v)$.  Since $M_N^f(v)$ converges
  to $M^{f}(v)$ for all $v \in Q^{*}$, we get that
  each entry of $H^N_{f,n-1,n}$ converges to the corresponding entry
  of $H_{f,n-1,n}$. Modify Algorithm \ref{alg0}
  by fixing the choice of columns to $(i_1,\ldots,i_n)$ in the first
  step. It is easy to see that
 the modified algorithm represents a continuous map from
 the input data (finite Hankel-matrix) to the output data
 (matrices of a \LSS). For sufficiently large $N$,
  the columns of $H_{f,n-1,n}^N$ indexed by $(i_1,\ldots,i_n)$
  also represent a basis of the column space of $H_{f,n-1,n}^N$.
  If we apply the modified Algorithm \ref{alg0} to
  the sequence of matrices $H^N_{f,n-1,n}$, 
  we obtain a sequence of \SLSS\ $\Sigma_{n,N}$ and
  the parameters of $\Sigma_{n,N}$ converge 
  to the parameters of the \LSS\ $\Sigma$
  which we would
  obtain from Algorithm \ref{alg0} if we applied it to
  $H_{f,n-1,n}$. In particular, by choosing a 
  sufficiently large $N$, the
  parameters of $\Sigma_{n,N}$ are sufficiently close to
  those of $\Sigma$.
 \end{Remark}

  We will show that for \textbf{every} reversible \LSS\ 
  there exists \textbf{some} 
  input which is persistently exciting.
  In addition, we present a class of inputs which are 
  persistently exciting of any input-output map $f$
  realizable by a stable \LSS.

  \subsection{Persistently exciting input for
              specific systems}

  \label{sec:spec_pers}
  In this section we present results which state that
  for any input-output map $f$ which is realizable
  by a \LSS, there exists a persistently exciting 
  finite input. 

  Note that from \eqref{eq:MarkovParam} it follows that the
  Markov-parameters of $f$ can be obtained from finitely many
  input-output data. However, the application of \eqref{eq:MarkovParam}
  implies evaluating the response of the system for different
  inputs, while started from a fixed initial state.
  In order to simulate this by evaluating the response of
  the system to one single input (which is then necessarily
  persistently exciting), one has to provide means to
  reset the system to its initial state. 
  In order to be able
  to do so, we restrict attention to \emph{reversible}
  \SLSS.
  \begin{Definition}
  A \LSS\ $\Sigma$ of the form \eqref{lin_switch0} is
  \emph{reversible}, if for every discrete mode $q \in Q$,
   the matrix $A_q$ is invertible.
  \end{Definition}
  Reversible \SLSS\ arise naturally when sampling continuous-time
  systems.
  \begin{Theorem}
  \label{pers:theo1}
   Consider an input-output map $f$. Assume that $f$ has a 
   realization by a reversible \LSS.
   Then there exists an input $w \in \HYBINP^{+}$ such that
   $w$ is persistently exciting for $f$. 
  \end{Theorem}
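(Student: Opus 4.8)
The plan is to build the persistently exciting input by concatenating a finite list of short ``probing'' experiments, each of which reads off one of the finitely many Markov-parameters needed by Theorem \ref{part_real_lin:theo1}, and to use reversibility to drive the state back to zero between consecutive probes so that every probe effectively starts from the zero initial state.

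First I would fix a reversible realization $\Sigma$ of $f$ and reduce, without loss of generality, to the case where $\Sigma$ is also span-reachable. Indeed, when started from zero the state of $\Sigma$ always remains in the span-reachability subspace $\mathcal{R}=\mathrm{Span}\{A_v B_q e_j \mid v \in Q^{*}, q \in Q, j=1,\ldots,m\}$; since each $A_q$ is invertible and leaves $\mathcal{R}$ invariant, $A_q|_{\mathcal{R}}$ is again invertible, so restricting $\Sigma$ to $\mathcal{R}$ yields a realization of the \emph{same} $f$ (restrict each $C_q$ to $\mathcal{R}$) that is simultaneously span-reachable and reversible, of dimension $n=\dim \mathcal{R}$.

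Second, I would establish the key \emph{reset} lemma: in a span-reachable reversible \LSS\ there is a single switching word $W^{*}$ along which every state is reachable from $0$, and hence along which the state can be driven from any $x$ back to $0$ by a suitable choice of continuous inputs. To prove it, choose $W^{*}$ so that the reachable subspace $\mathcal{R}_{W^{*}}$ (states reachable from $0$ using exactly the switching sequence $W^{*}$) has maximal dimension. Prepending any word $P$ can only enlarge the reachable subspace, so by maximality $\mathcal{R}_{PW^{*}}=\mathcal{R}_{W^{*}}$; unravelling $\mathcal{R}_{PW^{*}}=A_{W^{*}}\mathcal{R}_{P}+\mathcal{R}_{W^{*}}$ gives $A_{W^{*}}\mathcal{R}_{P}\subseteq \mathcal{R}_{W^{*}}$ for every $P$. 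As the $\mathcal{R}_{P}$ together span the whole space by span-reachability, and $A_{W^{*}}$ is invertible (here reversibility is essential), $\mathcal{R}_{W^{*}}$ must be the entire state space. Consequently, from any $x$ the set of states reachable along $W^{*}$ is $A_{W^{*}}x+\mathcal{R}_{W^{*}}$, i.e.\ the whole space, so $0$ is reachable; the needed continuous inputs are computable from the known matrices of $\Sigma$ and the current (known) state.

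Third, I would assemble $w$. By Theorem \ref{part_real_lin:theo1} and the remark following it, $f$ is determined by the finitely many Markov-parameters $S^f_j(q_0 v q)$ with $q_0,q\in Q$, $j\in\{1,\ldots,m\}$ and $|v|\le 2n-1$. To each such triple I associate the probe $(q_0,e_j)(\sigma_1,0)\cdots(\sigma_{|v|},0)(q,0)$, whose terminal output, when started from $0$, equals $S^f_j(q_0 v q)$ by \eqref{eq:MarkovParam}. I would let $w$ be the concatenation of all these probes, inserting between consecutive probes a copy of $W^{*}$ carrying the continuous inputs that return the state to $0$ (possible by the reset lemma, since the current state is always known). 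Because every probe then begins from the zero state, the state trajectory during each probe coincides with the one produced by running that probe in isolation from $0$; hence the entry of $\mathbf{O}(f,w)$ recorded at the time step ending each probe is exactly the corresponding Markov-parameter, the outputs during the reset segments being discarded. This recovers all required Markov-parameters from $(w,\mathbf{O}(f,w))$, so $w$ is persistently exciting by Remark \ref{rem:Unicity-of-from-MP} and Theorem \ref{part_real_lin:theo1}.

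The main obstacle is the reset lemma: reducing ``every state can be driven to $0$'' to the existence of a single full-reachability word, and pinpointing where invertibility of the $A_q$ is indispensable (it is precisely what upgrades span-reachability of the union of the $\mathcal{R}_{P}$ into full reachability along one word, and what places $0$ inside $A_{W^{*}}x+\mathcal{R}_{W^{*}}$). The remaining bookkeeping --- enumerating the finitely many needed Markov-parameters and verifying that concatenation with resets leaves the recorded outputs uncorrupted --- is routine once the reset is in hand.
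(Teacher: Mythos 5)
Your proposal is correct, and it follows the paper's overall skeleton: enumerate the finitely many probes $(q_0,e_j)(\sigma_1,0)\cdots(\sigma_{|v|},0)(q,0)$ whose terminal outputs yield the Markov-parameters $\{M^f(v_i)\}_{i=1}^{\mathbf{N}(2n-1)}$, and concatenate them with intervening inputs returning the state to zero --- exactly the paper's construction $w=s_1s_1^{-1}\cdots s_{d-1}s_{d-1}^{-1}s_d$. Where you genuinely diverge is in justifying the reset. The paper simply cites \cite{Sun:Disc}, i.e.\ the fact that for reversible \SLSS\ the reachable and controllable sets coincide, and takes $s^{-1}$ to be any input driving $x(s)=x_{\Sigma}(0,s)$ to zero. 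You instead prove a self-contained reset lemma: after restricting to the span-reachability subspace $\mathcal{R}$ (a reduction you correctly verify preserves reversibility, since each invertible $A_q$ maps the finite-dimensional invariant subspace $\mathcal{R}$ onto itself), you choose a switching word $W^{*}$ maximizing $\dim \mathcal{R}_{W^{*}}$, derive $A_{W^{*}}\mathcal{R}_{P}\subseteq \mathcal{R}_{W^{*}}$ for every $P$ from maximality and the concatenation identity $\mathcal{R}_{PW^{*}}=A_{W^{*}}\mathcal{R}_{P}+\mathcal{R}_{W^{*}}$, and then use invertibility of $A_{W^{*}}$ together with span-reachability to conclude that $\mathcal{R}_{W^{*}}$ is the whole state space, whence $0 \in A_{W^{*}}x+\mathcal{R}_{W^{*}}$ for every $x$. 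I checked the dimension-maximality step and the two uses of invertibility; the argument is sound. Your route buys two things the citation does not: a single universal reset switching word usable after every probe, and an elementary proof that makes transparent exactly where reversibility is indispensable. The cost is length; the paper's appeal to \cite{Sun:Disc} is shorter, and its per-state reset input suffices because the current state is computable anyway. Both constructions, as you and the paper's subsequent remark both note, require knowledge of a \LSS\ realization of $f$ to compute the reset inputs.
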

  \begin{proof}[Sketch of the proof]
   The main idea behind the proof of Theorem \ref{pers:theo1}
   is as follows.  If $f$ admits a \LSS\ realization
   of dimension $n$, then the finite sequence
   $\{M^f(v_i)\}_{i=1}^{\mathbf{N}(2n-1)}$ of
   Markov-parameters determine all the Markov-parameters 
   of $f$ uniquely. Hence, in order for a finite
   input $w$ to be persistently exciting for $f$, it is
   sufficient that $\{M^f(v_i)\}_{i=1}^{\mathbf{N}(2n-1)}$
   can be computed from the response $(w,\mathbf{O}(f,w))$.

   Note that \eqref{eq:MarkovParam} implies that 
   $\{M^f(v_i)\}_{i=1}^{\mathbf{N}(2n-1)}$ 
   can be computed from the responses 
   of $f$ from finitely many inputs. More precisely,
   $\{M^f(v_i)\}_{i=1}^{\mathbf{N}(2n-1)}$ can
   be computed from $\{f(s) \mid s \in S\}$,
   where 
\[
     \begin{aligned}
     & S=\{(q_0,e_j) (\sigma_1,0)\ldots (\sigma_{|v_i|},0) (q,0) \in \HYBINP^{+} \mid q_0,q \in Q, \\
		&v_i=\sigma_1\ldots \sigma_{|v_i|},  j=1,\ldots,m, i=1,\ldots, \mathbf{N}(2n-1) \}.
     \end{aligned}
  \]	
   Hence, if for each $s \in S$ there exists a prefix $p$ 
   of $w$ such that $f(s)=f(p)$, then this $w$ will be
   persistently exciting. 

   One way to construct such a 
   $w$ is to construct for each $s \in S$ an input
   $s^{-1} \in \HYBINP^{+}$ such that 
   \[ \forall v \in \HYBINP^{+}: f(ss^{-1}v)=f(v). \]
   That is, the input $s^{-1}$ neutralizes the effect of
   the input $s$. We defer the construction of the
   input $s^{-1}$ to the end of the proof.
   Assume for the moment being that such  inputs
   $s^{-1}$ exist.
   Let $S=\{s_1,\ldots,s_d\}$ be an
   enumeration of $S$. Then it is easy to see that
   $f(s_1s_1^{-1}s_2)=f(s_2)$,
   $f(s_1s_1^{-1}s_2s_2^{-1}s_3)=f(s_3)$, etc. 
   Hence, if we define 
   \[ w=s_1s_1^{-1}\cdots s_{d-1}s_{d-1}^{-1}s_d, \]
  then each $f(s)$, $s \in S$ can be obtained as a 
  response of $f$ to a suitable prefix of $w$.
  Hence,  $w$ is persistently exciting.

  It is left to show that $s^{-1}$ exists. 
  Consider a reversible realization $\Sigma$ of $f$.
   Then the controllable set and reachable set of $\Sigma$
   coincide by \cite{Sun:Disc}.
   Hence, from any reachable state $x$ of
   $\Sigma$, there exists an input $w(x)$ such that
   $w(x)$ drives $\Sigma$ from $x$ to zero, i.e.
   $x_{\Sigma}(x,w(x))=0$. For each $s \in S$,
   let $x(s)=x_{\Sigma}(0,s)$ and define
   $s^{-1}=w(x(s))$ as the input which drives $x(s)$
   back to the initial zero state.
 \end{proof}
 It is easy to see that Theorem \ref{pers:theo1} can be extended to
  any input-output map which admits a controllable \LSS\ realization.
  However, it is not clear if every input-output map
  which is realizable by a \LSS\ is also realizable by a
  controllable \LSS.
  Note that the construction of the persistently exciting
  $w$ from Theorem \ref{pers:theo1} requires the knowledge
  of a \LSS\ realization of $f$. 
   Below we present a subclass of input-output maps, for which
   the knowledge of a state-space representation is
   not required to construct a persistently
   exciting input.
  \begin{Definition}
  \label{spec:rev}
   Fix a map $.^{-1}:\HYBINP \ni \alpha \mapsto \alpha^{-1} \in \HYBINP$.
   A input-output map $f$ is said to be \emph{reversible with
   respect to the map $.^{-1}$}, if 
   for all $\alpha \in \HYBINP$, $s,w \in \HYBINP^{*}$, $|sw| > 0$,
   \[ f(s\alpha\alpha^{-1}w)=f(sw). \]
  \end{Definition}
   Intuitively, $f$ is reversible with respect to $.^{-1}$, if the effect of any input $\alpha=(q,u)$
   can be neutralized by the input $\alpha^{-1}$.
   Such a property is not that uncommon, think for example of
   turning a valve on and off. 
   For example,
   if $f$ has a realization by a \LSS\ $\Sigma$ of the form 
   \eqref{lin_switch0},
   and $Q=\{1,\ldots, 2K\}$ such that for each $q \in \left\{1,\ldots,K\right\}$,
   $A_q=A_{q+K}^{-1}$, $B_{q}=-AB_{q+K}$, then $f$ is reversible and 
   $(q,u)^{-1}=(q+K,-u)$.
	
   \noindent From the proof of Theorem \ref{pers:theo1}, we obtain the following
   corollary.
   \begin{Theorem}
   \label{spec:rev:theo}
   If $f$ is reversible with respect to $.^{-1}$, 
  then a persistently exciting 
   input sequence $w$ can be constructed for $f$. The construction
   does not require the knowledge of a \LSS\ state-space realization 
   of $f$. If the inputs $\alpha^{-1}$ 
   from Definition \ref{spec:rev} are
   computable from $\alpha$, then the construction of $w$ is
   effective.
   \end{Theorem}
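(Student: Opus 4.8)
The plan is to transcribe the proof of Theorem \ref{pers:theo1} almost verbatim, changing only the single place where that argument consulted a state-space model: the construction of the neutralizing input $s^{-1}$. There, $s^{-1}$ was obtained by driving the state of a realization back to zero, which required knowing the matrices of a \LSS. Here I would instead build $s^{-1}$ purely from the symbol-wise inverse map $.^{-1}$ provided by Definition \ref{spec:rev}, so that no realization is ever inspected.

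First I would lift the one-symbol neutralization of Definition \ref{spec:rev} to arbitrary words. Given $s = \alpha_1\alpha_2\cdots\alpha_k \in \HYBINP^{+}$ with $\alpha_i \in \HYBINP$, I would define \[ s^{-1} = \alpha_k^{-1}\alpha_{k-1}^{-1}\cdots\alpha_1^{-1}, \] that is, reverse the order of the letters and invert each one. The fact to establish is that $f(ss^{-1}v)=f(v)$ for every $v \in \HYBINP^{+}$. I would prove it by cancelling matched pairs from the inside out: setting $P_j=\alpha_1\cdots\alpha_j\alpha_j^{-1}\cdots\alpha_1^{-1}$, the innermost pair of $P_jv$ is $\alpha_j\alpha_j^{-1}$, and applying Definition \ref{spec:rev} with $\alpha=\alpha_j$, $s=\alpha_1\cdots\alpha_{j-1}$ and $w=\alpha_{j-1}^{-1}\cdots\alpha_1^{-1}v$ yields $f(P_jv)=f(P_{j-1}v)$. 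A downward induction on $j$ then gives $f(ss^{-1}v)=f(P_kv)=f(P_0v)=f(v)$. The step I expect to require the most care is exactly this induction, and specifically the bookkeeping that the side condition $|sw|>0$ of Definition \ref{spec:rev} is never violated during the cancellation: since $|w|\ge|v|>0$ at every stage, the condition holds throughout, and this is precisely what dictates the telescoping letter order chosen for $s^{-1}$. Note that the whole construction consumes nothing but the letters of $s$ and their images under $.^{-1}$.

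With word-level neutralization in hand, the remainder is identical to the proof of Theorem \ref{pers:theo1}. Retaining the assumption, implicit in the definition of persistence of excitation, that $f$ admits a \LSS\ realization of dimension at most $n$, the finite set $S=\{s_1,\ldots,s_d\}$ from that proof has the property that $\{f(s)\mid s\in S\}$ determines $\{M^f(v_i)\}_{i=1}^{\mathbf{N}(2n-1)}$ via \eqref{eq:MarkovParam}, and these in turn determine all Markov-parameters of $f$ by Theorem \ref{part_real_lin:theo1}. I would then set \[ w = s_1s_1^{-1}s_2s_2^{-1}\cdots s_{d-1}s_{d-1}^{-1}s_d \] and observe that applying the word-level neutralization $k-1$ times gives $f(s_1s_1^{-1}\cdots s_{k-1}s_{k-1}^{-1}s_k)=f(s_k)$, so every value $f(s_k)$ is recovered as the response of $f$ to a prefix of $w$. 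Hence all the required Markov-parameters, and therefore all Markov-parameters, are computable from $(w,\mathbf{O}(f,w))$, which makes $w$ persistently exciting. Finally, because the only ingredients used to assemble $w$ are the enumeration of $S$ and the per-letter inverses under $.^{-1}$, the construction needs no state-space realization of $f$, and it is effective precisely when the map $\alpha\mapsto\alpha^{-1}$ is computable.
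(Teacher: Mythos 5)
Your proposal is correct and takes essentially the same route as the paper, which likewise defines $s^{-1}=(q_t,u_t)^{-1}(q_{t-1},u_{t-1})^{-1}\cdots(q_0,u_0)^{-1}$ and declares everything else identical to the proof of Theorem \ref{pers:theo1}. Your telescoping induction showing $f(ss^{-1}v)=f(v)$, with the check that the side condition $|sw|>0$ of Definition \ref{spec:rev} holds at every cancellation step, simply makes explicit a verification the paper leaves implicit.
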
 
   \begin{proof}[Proof of Theorem \ref{spec:rev:theo}]
    The proof differs from that of Theorem \ref{pers:theo1}
    only in the definition of $s^{-1}$ for each
    $s \in S$. More precisely, if $f$ is reversible,
    then for each $s=(q_0,u_0)\cdots (q_t,u_t) \in S$ 
    define
    \[ s^{-1}=(q_t,u_t)^{-1}(q_{t-1},u_{t-1})^{-1}\cdots (q_0,u_0)^{-1}
    \]
   \end{proof}

 \subsection{Universal persistently exciting inputs}
 \label{sec:uni}  
   
 Next, we discuss classes of inputs which are 
 persistently exciting for all input-output maps
 realizable by \SLSS.

 \begin{Definition}[Persistence of excitation cond]
 \label{uni_pe:def}
  An
 infinite input $w=\{(q_t,u_t)\}_{t=0}^{\infty} \in \HYBINP^{\omega}$ satisfies PE condition,
  if there exists a strictly positive definite $m \times m$
  matrix $\mathcal{R}$ and a collection of strictly positive numbers
  $\{\pi_{v}\}_{v \in Q^{+}}$ such that for any positive integer
  $j \in \mathbb{N}$ and any word $v \in Q^{+}$
  \[ 
     \begin{split}
      & \lim_{N \rightarrow \infty} \frac{1}{N} \sum_{t=0}^{N}
       u_{t+j}u_t^T\chi(q_tq_{t+1}\cdots q_{t+|v|-1}=v) = 0 \\
      & \lim_{N \rightarrow \infty} \frac{1}{N} \sum_{t=j}^{N}
       u_{t-j}u_t^T\chi(q_{t-j}q_{t-j+1}\cdots q_{t-j+|v|-1}=v) = 0 \\
      & \lim_{N \rightarrow \infty} \frac{1}{N} \sum_{t=0}^{N}
          u_{t}u_{t}^{T}\chi(q_t\cdots q_{t+|v|-1}=v)=\pi_{v}\mathcal{R}
     \end{split}
  \]
  where $\chi$ is the indicator function, i.e. $\chi(A)=1$ if $A$
  holds and $\chi(A)=0$ otherwise.
 \end{Definition}
 \begin{Remark}[PE condition implies rich switching]
 Note that if $w \in \HYBINP^{\omega}$ satisfies the conditions
 of Definition \ref{uni_pe:def}, then
 \[ \lim_{N \rightarrow \infty} \sum_{t=0}^{N} \chi(q_t\cdots q_{t+|v|-1}=v)=\pi_{v} > 0
 \]
 for each $v \in Q^{+}$. This implies that  any sequence of
 discrete modes occurs in the switching signal. Hence, our condition
 for persistence of excitation implies that the switching signal
 should be rich enough. This is consistent with many of the existing
 definitions of persistence of excitation for hybrid systems.
 The requirement that $\pi_v > 0$ for all $v \in Q^{*}$ is quite 
 a strong one. At the end of this section we will discuss
 possible relaxations of this requirement.
 \end{Remark}
 \begin{Remark}[Relationship with stochastic processes]
 \label{uni_pe:def:rem}
  Fix a probability space $(\Omega,\mathcal{F},P)$ and consider
  ergodic discrete-time stochastic processes 
  $\textbf{u}_t:\Omega \rightarrow \mathbb{R}^{m}$ and
  $\textbf{q}_t:\Omega \rightarrow  Q$
  with values in $\mathbb{R}^m$ and $Q$ respectively.
  In addition, assume the following.
  \begin{itemize}
  \item
  The processes $\textbf{u}_t$ and $\textbf{q}_t$ are independent (i.e. the $\sigma$-algebras generated
  by $\{\textbf{u}_t\}_{t=0}^{\infty}$ and by $\{\textbf{q}_t\}_{t=0}^{\infty}$ are independent.
  \item
     The stochastic process $\textbf{u}_t$ is a colored noise, i.e. it is zero-mean, 
    $\textbf{u}_t$ and $\textbf{u}_s$ are uncorrelated and $E[\textbf{u}_t\textbf{u}_t^T]=\mathcal{R}> 0$, with $E[\cdot]$ denoting the expectation operator. 
  \item
        For each $v \in Q^{+}$,
        $\pi_{v}=P(\textbf{q}_t\cdots \textbf{q}_{t+|v|-1}=v) > 0$.
  \end{itemize}
 It then follows that almost all sample paths of $\textbf{u}_t$,
 $\textbf{q}_t$ satisfy the PE condition of Definition \ref{uni_pe:def}. That is, there exists a set $A \in \mathcal{F}$, such that
  $P(A)=0$ and for all $\omega \in \Omega \setminus A$,
  the sequence $w=\{(q_t,u_t)=(\textbf{q}_t(\omega),\textbf{u}_t(\omega)\}_{t=0}^{\infty}$ satisfies the PE condition.
 \end{Remark}
 \begin{Remark}
 \label{uni_pe:def2}
  If $\textbf{u}_t$ is a white-noise Gaussian process and if the
  variables
  $\textbf{q}_{t}$ are uniformly distributed over $Q$ 
  (i.e. $P(\textbf{q}_t=q)=\frac{1}{|Q|}$ and are independent from
  each other and from $\{\textbf{u}_{s}\}_{s=0}^{\infty}$,
  then $\textbf{u}_t$ and $\textbf{q}_t$ satisfy
  the conditions of Remark \ref{uni_pe:def:rem} and hence
  almost any sample path of $\textbf{u}_t$ and $\textbf{q}_t$ 
  satisfies the PE condition of Definition \ref{uni_pe:def}.

  This special case also provides a simple practical way to
  generate inputs which satisfy the PE conditions.
 \end{Remark}
  
 We will show that input sequences which satisfy
 the conditions of Definition \ref{uni_pe:def} are
 asymptotically persistently exciting for a large class
 of input-output maps.
 The main idea behind the theorem is as follows.
 Consider a \LSS\ $\Sigma$ which is realization of $f$, 
 and suppose we feed
 a stochastic input $\{\textbf{q}_t, \textbf{u}_t\}$ 
 into $\Sigma$. Then the state $\textbf{x}_t$ 
 and the output response 
 $\textbf{y}_t$ of $\Sigma$ will also be stochastic processes.
 Suppose that $\{\textbf{q}_t,\textbf{u}_t\}$ are
 stochastic processes which satisfy the
 conditions of Remark \ref{uni_pe:def:rem}.
 It is easy to see that
 \[ \textbf{y}_{t}=\sum_{k=0}^{t} C_{\textbf{q}_t}A_{\textbf{q}_{t-1}}\cdots A_{\textbf{q}_{k+1}}B_{\textbf{q}_{k}}
 \textbf{u}_k. \]
 and hence for all $r,q \in Q$, $v \in Q^{*}$, $|rvq|=t+1$,
 \begin{equation}
 \label{uni:eq1}
   \begin{split}
    & E[\textbf{y}_{t}\textbf{u}_0^T\chi(\textbf{q}_0\cdots \textbf{q}_{t}=rvq)]=  \\
    & \sum_{k=0}^{t} C_{q}A_{v}B_{r}E[\textbf{u}_k\textbf{u}_0^T\chi(\textbf{q}_0\cdots \textbf{q}_{t}=rvq)]= \\
    & C_{q}A_{v}B_{r}\mathcal{R}\pi_{rvq}=S^f(rvq)\mathcal{R}\pi_{rvq}.
   \end{split}
 \end{equation}
 Hence, if we know the expectations
 $E[\textbf{y}_{t}\textbf{u}_0^T\chi(\textbf{q}_0\cdots \textbf{q}_{t}=rvq)]$ for all $r,q \in Q$, $v \in Q^{*}$, 
  $|rvq|=t+1$, $t > 0$, then we can find all the Markov-parameters of $f$, by the following formula
\[ 
   S^f(rvq)=E[\textbf{y}_{t}\textbf{u}_0^T\chi(\textbf{q}_0\cdots \textbf{q}_{t+1}=rvq)]\mathcal{R}^{-1}\frac{1}{\pi_{rvq}}.
\]

Hence, the problem of estimating the Markov-parameters
reduces to estimating the expectations
\begin{equation}  
\label{uni:eq2}
  E[\textbf{y}_{t}\textbf{u}_0^T
   \chi(\textbf{q}_0\cdots \textbf{q}_{t}=rvq)].
\end{equation}
For practical purposes, the expectations in
\eqref{uni:eq2} have to be estimated from a sample-path
of $\textbf{y}_t$, $\textbf{u}_t$ and $\textbf{q}_t$.
The most natural way to accomplish this is to use
the formula
\begin{equation}
\label{uni:eq3} 
   \lim_{N \rightarrow \infty} \frac{1}{N}
   \sum_{t=i}^{N} y_{i+t}u_i^T\chi(q_{i}\cdots q_{i+t}=rvq) 
\end{equation}
where $y_t$, $u_t$, $q_t$ denote the value at time $t$ of a
sample-path of $\textbf{y}_t$, $\textbf{u}_t$ and
$\textbf{q}_t$ respectively. Note that 
$y_t$ is in fact the output of $\Sigma$ at time $t$, if
the input $\{u_i\}_{i=0}^{t}$ and the switching signal $\{q_i\}_{i=0}^{t}$ are fed to the system.

The problem with estimating \eqref{uni:eq2} by \eqref{uni:eq3}
is that the limit \eqref{uni:eq3} may fail to exist or to
converge to \eqref{uni:eq2}. 

A particular case when 
\eqref{uni:eq3} converges to \eqref{uni:eq2} is when the
process $(\textbf{y}_t,\textbf{u}_t,\textbf{q}_t)$ is ergodic.
In that case, we can choose a sample-path
$(y_t,u_t,q_t)$ of $(\textbf{y}_t,\textbf{u}_t,\textbf{q}_t)$
for which the limit in \eqref{uni:eq3} equals the expectation 
\eqref{uni:eq2} ; in fact `almost all' sample paths will have this
property. This means that we can choose a suitable deterministic
input sequence $\{u_{t}\}_{t=0}^{\infty}$ and a switching signal
$\{q_t\}_{t=0}^{\infty}$, such that for the resulting output
$\{y_t\}_{t=0}^{\infty}$, the limit \eqref{uni:eq3} equals the
expectation \eqref{uni:eq2}. That is, in that case the
input $w=(q_0,u_0)\cdots (q_t,u_t) \cdots $ is asymptotically
persistently exciting. However, proving ergodicity of
$\textbf{y}_t$ is not easy. In addition, even if $\textbf{y}_t$
is ergodic, the particular choice of the deterministic
input $w$ for which 
\eqref{uni:eq3} equals \eqref{uni:eq2} might depend on the 
\LSS\ itself.

For this reason, instead of using the concepts of ergodicity
directly, we just show that for the input sequences $w$ which
satisfy the conditions of Definition \ref{uni_pe:def},
the corresponding output $\{y_t\}_{t=0}^{\infty}$ has the 
property that the limit \eqref{uni:eq3} exists and it equals
$S^f(rvq)\mathcal{R}\pi_{rvq}$, for any input-output map $f$ which is realizable
by a \emph{$l_1$-stable \LSS}. This strategy allows us to use elementary
techniques, while not compromising the practical relevance of
the result.

In order to present the main result of this section, we have to
define the notion of \( l_1\)-stability of \SLSS. 

\begin{Definition}[Stability of \SLSS]
 A \LSS\ $\Sigma$ of the form \eqref{lin_switch0} is called
 $l_1$-stable, if for every $x \in \mathbb{R}^{n}$,
 the series $\sum_{v \in Q^{*}} ||A_vx||_{2}$ is convergent.
\end{Definition}
\begin{Remark}[Sufficient condition for stability]
 If for all $q \in Q$, $||A_q||_{2} < \frac{1}{|Q|}$, where
 $||A_q||_{2}$ is the matrix norm of $A_q$ induced by the
 standard Euclidean norm, then $\Sigma$ is $l_1$-stable.
\end{Remark}
\begin{Remark}[Asymptotic stability]
 If $\Sigma$ is $l_1$-stable, 
  then it is asymptotically stable, in the sense
 that if $s_i \in Q^{*}$, $i > 0$ is a sequence of words such that
 $\lim_{i \rightarrow \infty} |s_i|=+\infty$, then
 $\lim_{i \rightarrow \infty} A_{s_i}x=0$ for all $x \in \mathbb{R}^n$.
\end{Remark}
 Intuitively it is clear why we have to restrict attention to
 stable systems. Recall that \eqref{eq:MarkovParam} allows us
 to compute the Markov-parameters of $f$ from the responses of
 $f$ to finitely many inputs. In order to obtain the response
 of $f$ to several inputs from the response of $f$ to 
 one input, one has to find means to suppress the contribution of
 the current state of the system to future inputs.
 In \S  \ref{sec:spec_pers} this was done by feeding inputs which drive the
 system back to the initial state. Unfortunately, the choice of such
 inputs depended on the system itself. By assuming stability, we
 can make sure that the effect of the past state will 
 asymptotically diminish in time. Hence, by waiting long enough,
 we can approximately recover the response of $f$ to any input.

 Another intuitive explanation for assuming stability is that
 it is necessary for the stationarity, and hence ergodicity,
 of the output and state processes $\textbf{y}_t$, $\textbf{x}_t$.

 Equipped with the definitions above, we can finally state the
 main result of the section.
 \begin{Theorem}[Main result]
 \label{uni:theo}
  If $w$ satisfies the PE conditions of 
  Definition \ref{uni_pe:def}, then
  $w$ is asymptotically persistently exciting for any 
  input-output map $f$ which admits a $l_1$-stable 
  \LSS\ realization.
 \end{Theorem}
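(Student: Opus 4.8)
The plan is to reduce the claim to estimating, from the data $(w_N,\mathbf{O}(f,w_N))$, each scalar-block Markov-parameter $S^f(rvq)$, since every combined parameter $M^f(v)$ is assembled from these. Fix an $l_1$-stable realization $\Sigma$ of $f$; by Lemma \ref{sect:io:lemma1} we have $S^f(rvq)=C_qA_vB_r$. Fix a word $rvq$ with $r,q\in Q$, $v\in Q^{*}$, and set $\tau=|v|+1$. Expanding the output through the state recursion gives
\[ y_{i+\tau}=\sum_{k=0}^{i+\tau-1}C_{q_{i+\tau}}A_{q_{k+1}\cdots q_{i+\tau-1}}B_{q_k}u_k, \]
and I would study the empirical cross-correlation
\[ \Phi_N=\frac{1}{N}\sum_{i=0}^{N}y_{i+\tau}u_i^{T}\chi(q_i\cdots q_{i+\tau}=rvq), \]
which is computable from inputs and outputs alone. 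The deterministic counterpart of \eqref{uni:eq1} that I aim to establish is $\Phi_N\to S^f(rvq)\mathcal{R}\pi_{rvq}$, with time averages replacing expectations.

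The first step is to split the inner sum by the position of $k$ relative to $i$. When $k=i$, on the matching event the transfer matrix collapses to $C_qA_vB_r=S^f(rvq)$, so the diagonal part equals $S^f(rvq)\cdot\frac{1}{N}\sum_i u_iu_i^{T}\chi(q_i\cdots q_{i+\tau}=rvq)$, which tends to $S^f(rvq)\mathcal{R}\pi_{rvq}$ by the third condition of Definition \ref{uni_pe:def} for the word $rvq$. When $k>i$, the gap $j=k-i$ takes only the finitely many values $1,\ldots,\tau-1$, and on the matching event each transfer matrix is a constant determined by $rvq$; hence each such term is a fixed matrix times $\frac{1}{N}\sum_i u_{i+j}u_i^{T}\chi(q_i\cdots q_{i+\tau}=rvq)$, which vanishes by the first condition of Definition \ref{uni_pe:def}. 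Thus the diagonal and forward parts already yield the claimed limit, and it remains to show the backward part ($k<i$) is negligible.

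For the backward part I would set $j=i-k\ge1$ and further split off the unknown past switching word $s=q_{i-j}\cdots q_{i-1}\in Q^{j}$. Using $A_{w_1w_2}=A_{w_2}A_{w_1}$ and the matching event, the $(j,s)$-contribution factors as a constant matrix $G_{j,s}=C_qA_{rv}A_{s_1\cdots s_{j-1}}B_{s_0}$ times $\frac{1}{N}\sum_i u_{i-j}u_i^{T}\chi(q_{i-j}\cdots q_{i+\tau}=s\,rvq)$, and each such average tends to $0$ by the second condition of Definition \ref{uni_pe:def} applied to the word $s\,rvq$. The backward part is then the double series $\sum_{j\ge1}\sum_{s\in Q^{j}}G_{j,s}\big(\frac{1}{N}\sum_i\cdots\big)$, and the crux of the whole argument is to interchange $\lim_N$ with this summation. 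This is exactly where $l_1$-stability enters: applying the defining series to the standard basis yields $\sum_{w\in Q^{*}}\|A_w\|<\infty$, whence $\sum_{j,s}\|G_{j,s}\|<\infty$; at the same time, summing the third condition over the single-letter words shows $\frac{1}{N}\sum_i\|u_i\|^{2}$ is bounded, so the averages $\frac{1}{N}\sum_i u_{i-j}u_i^{T}\chi(\cdots)$ are bounded uniformly in $N$, $j$, $s$. A dominated-convergence argument for series (counting measure on the pairs $(j,s)$) then forces the backward part to $0$.

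Combining the three parts gives $\Phi_N\to S^f(rvq)\mathcal{R}\pi_{rvq}$. Since $\frac{1}{N}\sum_i u_iu_i^{T}\chi(q_i\cdots q_{i+\tau}=rvq)\to\mathcal{R}\pi_{rvq}$, which is invertible because $\mathcal{R}>0$ and $\pi_{rvq}>0$ and is itself computable from the inputs, the estimator $\widehat{S}_N(rvq)=\Phi_N\big(\frac{1}{N}\sum_i u_iu_i^{T}\chi(q_i\cdots q_{i+\tau}=rvq)\big)^{-1}$ converges to $S^f(rvq)$; assembling these over all $r,q,v$ produces estimates $M^f_N(v)\to M^f(v)$ built solely from $(w_N,\mathbf{O}(f,w_N))$, which is precisely asymptotic persistence of excitation. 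I expect the genuinely delicate point to be the interchange of the limit with the infinite backward summation, i.e. controlling the contribution of the arbitrarily distant past inputs; $l_1$-stability (through summability of the $G_{j,s}$) together with the uniform second-moment bound are the ingredients that make this rigorous.
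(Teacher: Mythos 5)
Your proposal is correct and is in substance the paper's own argument: your diagonal/forward/backward split of the output cross-correlation reproduces, in flattened form, the paper's induction on $|v|$ in Theorem \ref{main:lemma} (the forward terms killed one at a time by the first PE condition, the diagonal term by the third) together with Lemma \ref{main:basic:lemma}, whose proof is exactly your backward-part treatment --- grouping by the past switching word $s$, uniform boundedness of the empirical correlations via Cauchy--Schwarz and the third PE condition, summability of the gain matrices from $l_1$-stability, and the $\epsilon/2$ head-plus-tail ($M$-test) interchange of limit and series. The only cosmetic differences are that you normalize by the empirical moment matrix $\left(\frac{1}{N}\sum_i u_iu_i^T\chi(\cdot)\right)^{-1}$ where the paper uses the known limit $\mathcal{R}^{-1}\pi_{rvq}^{-1}$, and that your direct expansion dispenses with the trailing word $\beta$ that the paper carries only to make its induction go through.
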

 The theorem above together with Remark \ref{uni_pe:def2}
 imply that white noise input and a binary noise switching signal
 are asymptotically persistently exciting.
  The proof of Theorem \ref{uni:theo}
  relies on the following technical result.
 \begin{Theorem}
 \label{main:lemma}
  Assume that
  $\Sigma$ is a $l_1$-stable
  \LSS\ of the form \eqref{lin_switch0}, and
  assume that $w$ satisfies the PE conditions.
  Let $\{y_t\}_{t=0}^{\infty}$ and $\{x_t\}_{t=0}^{\infty}$ 
  be the output and state response of $\Sigma$
  to $w$, i.e. $y_t=y_{\Sigma}(w_t)$ and
  $x_t=x_{\Sigma}(0,w_t)$. Then for all $v,\beta \in Q^{*}$,
  $r,q \in Q$
  \begin{eqnarray}
    & \pi_{rvq\beta}A_vB_{r}\mathcal{R}= \nonumber \\
    & \lim_{N \rightarrow \infty} \frac{1}{N}
    \sum_{t=0}^{N} x_{t+|v|+1}u^T_t\chi(t,rvq\beta) 
   \label{main:lemma:eq1} \\
    & \pi_{rvq\beta} C_qA_vB_{r}\mathcal{R}= \nonumber \\
    & \lim_{N \rightarrow \infty} \frac{1}{N}
    \sum_{t=0}^{N} y_{t+|v|+1}u^T_t\chi(t,rvq\beta) 
   \label{main:lemma:eq2} 
  \end{eqnarray}
  Here we used the following notation: for all $s \in Q^{+}$,
  \[
     \chi(t,s)=\left\{\begin{array}{rl}
                    1 & \mbox{ if } s=q_tq_{t+1}\cdots q_{t+|s|-1} \\
                    0 & \mbox{ otherwise }
                \end{array}\right.
  \]
 \end{Theorem}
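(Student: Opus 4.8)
The plan is to expand the state response into its convolution sum and show that, after multiplication by $u_t^T\chi(t,rvq\beta)$ and Ces\`aro averaging, exactly one term survives in the limit while all the others vanish. Using the recursion $x_{t+1}=A_{q_t}x_t+B_{q_t}u_t$ with $x_0=0$ (and $y_t=C_{q_t}x_t$), the state at time $\tau$ is $x_\tau=\sum_{k=0}^{\tau-1}A_{q_{k+1}\cdots q_{\tau-1}}B_{q_k}u_k$, with $A_w$ as in Notation~\ref{repr:not1}. Taking $\tau=t+|v|+1$, I would split the sum $\sum_{k=0}^{t+|v|}$ into three groups: the diagonal term $k=t$, the finitely many future terms $t<k\le t+|v|$, and the past terms $k<t$. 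For the diagonal term, under $\chi(t,rvq\beta)$ the modes satisfy $q_t=r$ and $q_{t+1}\cdots q_{t+|v|}=v$, so $A_{q_{t+1}\cdots q_{t+|v|}}B_{q_t}=A_vB_r$ is frozen to a constant, and the contribution is $A_vB_r\cdot\frac1N\sum_t u_tu_t^T\chi(t,rvq\beta)$. The third PE condition of Definition~\ref{uni_pe:def}, applied to the word $rvq\beta$, forces this to converge to $\pi_{rvq\beta}A_vB_r\mathcal{R}$, which is exactly the right-hand side of \eqref{main:lemma:eq1}.

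For the future terms, for each $i\in\{1,\dots,|v|\}$ the indicator again freezes $A_{q_{t+i+1}\cdots q_{t+|v|}}B_{q_{t+i}}$ to a constant, and the remaining average $\frac1N\sum_t u_{t+i}u_t^T\chi(t,rvq\beta)$ vanishes by the first PE condition (lag $j=i$, word $rvq\beta$); since there are only $|v|$ of these, they contribute nothing in the limit. For the past terms, setting $j=t-k\ge1$ and inserting $1=\sum_{\gamma\in Q^{j}}\chi(q_{t-j}\cdots q_{t-1}=\gamma)$, the combined indicator $\chi(q_{t-j}\cdots q_{t+|v|+|\beta|+1}=\gamma rvq\beta)$ freezes $A_{q_{t-j+1}\cdots q_{t+|v|}}B_{q_{t-j}}$ to the constant matrix $A_{rv}A_{\gamma'}B_{\gamma_0}$ (writing $\gamma=\gamma_0\gamma'$). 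Hence the inner average becomes a finite combination, over $\gamma\in Q^{j}$, of quantities governed by the second PE condition (lag $j$, word $\gamma rvq\beta$), each tending to $0$; so for every \emph{fixed} lag $j$ the past contribution vanishes.

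The one genuine obstacle is that the lag $j$ ranges up to $t$, which grows with $N$, so I cannot merely add finitely many vanishing limits: I must interchange $\lim_N$ with the sum over $j$, and this is precisely where $l_1$-stability enters. From the definition, applying the defining series to the standard basis vectors and using $\|A_w\|_2\le\sum_i\|A_we_i\|_2$ gives $\sum_{w\in Q^{*}}\|A_w\|_2=:\Lambda<\infty$. I would then bound the lag-$j$ contribution in norm using $\|A_{q_{t-j+1}\cdots q_{t+|v|}}B_{q_{t-j}}\|_2\le\|A_{rv}\|_2\,(\max_q\|B_q\|_2)\sum_{w\in Q^{j-1}}\|A_w\|_2$, together with the fact that the third PE condition (summed over $q\in Q$ and traced) keeps $\frac1N\sum_t\|u_t\|_2^2$ bounded, whence $\frac1N\sum_t\|u_{t-j}\|_2\|u_t\|_2\le K$ uniformly in $j$ and $N$ by the arithmetic–geometric inequality. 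Thus the lag-$j$ contribution is dominated by a constant times $\sum_{w\in Q^{j-1}}\|A_w\|_2$, a sequence summable in $j$ with total $\le\Lambda$. A dominated-convergence argument for series (each term $\to0$, all terms majorised by a summable sequence) then shows the entire past block tends to $0$.

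Collecting the three pieces establishes \eqref{main:lemma:eq1}. Finally, \eqref{main:lemma:eq2} follows immediately: under $\chi(t,rvq\beta)$ one has $q_{t+|v|+1}=q$, so $y_{t+|v|+1}\chi(t,rvq\beta)=C_qx_{t+|v|+1}\chi(t,rvq\beta)$, and left-multiplying the limit already obtained by the constant $C_q$ yields $\pi_{rvq\beta}C_qA_vB_r\mathcal{R}$. I expect the domination/interchange step of the previous paragraph to be the crux of the whole argument; the rest is bookkeeping in the word algebra of Notation~\ref{repr:not1} and direct appeals to the three limits in Definition~\ref{uni_pe:def}.
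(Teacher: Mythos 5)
Your proof is correct, and it reaches the same analytic crux as the paper's, but it is organized differently. The paper proves \eqref{main:lemma:eq1} by induction on $|v|$: the base case $v=\epsilon$ uses the one-step recursion $x_{t+1}=A_{q_t}x_t+B_{q_t}u_t$, where the troublesome term $A_r\bigl(\frac{1}{N}\sum_t x_tu_t^T\chi(t,r\beta)\bigr)$ is disposed of by a separately stated technical result (Lemma \ref{main:basic:lemma}, asserting $\frac{1}{N}\sum_t x_tu_t^T\chi(t,v)\rightarrow 0$), and the inductive step appends one letter of $v$ at a time, killing the single cross term $B_q\frac{1}{N}\sum_t u_{t+L+1}u_t^T\chi(\cdot)$ by the first PE condition. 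You instead expand $x_{t+|v|+1}$ in one shot into its full convolution sum and split it into diagonal, future and past blocks: your future block does explicitly, all at once, what the paper's induction does letter by letter, and your past block is essentially the paper's proof of Lemma \ref{main:basic:lemma} --- the same insertion of indicators over the unconstrained past modes (the paper enumerates the words $rv_iv$ lexicographically where you sum over lags $j$ and $\gamma\in Q^{j}$, which is equivalent bookkeeping), the same observation that each fixed-lag average vanishes by the second PE condition, the same uniform bound on $\frac{1}{N}\sum_t\|u_{t-j}\|_2\|u_t\|_2$ obtained from boundedness of $\frac{1}{N}\sum_t\|u_t\|_2^2$ (you use the arithmetic--geometric mean inequality where the paper uses Cauchy--Schwarz on the Frobenius norm; both work), and the identical $M$-test/dominated-convergence interchange powered by $l_1$-stability, which you rightly single out as the heart of the matter. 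What each route buys: your one-shot expansion proves the statement for all $v$ simultaneously at the price of heavier index bookkeeping inside a single argument, while the paper's factorization into a zero-correlation lemma plus a clean induction keeps every individual step elementary and makes the hard estimate reusable (it is invoked again, unchanged, when the paper relaxes the PE condition to words of bounded length). One small point in your favor: you actually derive $\sum_{w\in Q^{*}}\|A_w\|_2<\infty$ from the vectorwise definition of $l_1$-stability via $\|A_w\|_2\le\sum_i\|A_we_i\|_2$, a step the paper glosses over by directly invoking convergence of $\sum_{v\in Q^{*}}\|A_vB_r\|_2$; your deduction of \eqref{main:lemma:eq2} by freezing $C_q$ under the indicator coincides with the paper's.
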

 Informally, Theorem \ref{main:lemma} implies that if
 $f$ is realizable by a $l_1$-stable \LSS, then 
 the limit \eqref{uni:eq3} equals \eqref{uni:eq2}.
 The proof of Theorem \ref{main:lemma} can be found in 
 Appendix \ref{proofs}.

 \begin{proof}[Proof of Theorem \ref{uni:theo}]
  For each $t$, denote by $y_t$ the response of $f$ to the
  first $t$ elements of $w$, i.e.
  $y_t=f((q_0,u_0)\cdots (q_t,u_t))$.
  For each integer $N \in \mathbb{N}$ and for each
  word $v \in Q^{*}$, define the matrix $S_{N}(rvq)$ as
  \[ S_{N}(rvq) = (\frac{1}{N} \sum_{t=0}^{N} y_{t+|v|+1}u^T_t\chi(t,rvq))\mathcal{R}^{-1}\frac{1}{\pi_{rvq}}
   \]
  and define the matrix $M_N(v)$ by
  \[
     \begin{bmatrix}
     S_N(1v1) & \cdots & S_N(\QNUM v1)  \\
     \vdots       & \vdots & \vdots \\
     S_N(1v \QNUM) & \cdots & S_{N}(\QNUM v\QNUM)  \\
     \end{bmatrix}
  \]
 From Theorem \ref{main:lemma} it follows that 
 \[ \lim_{N \rightarrow \infty} S_{N}(rvq)=S^f(rvq) \]
 and hence $\lim_{N \rightarrow \infty} M_{N}(v)=M^f(v)$.
 Hence, $w$ is indeed asymptotically persistently exciting.
 \end{proof}
 \begin{Remark}[Relaxation of PE condition]
  Assume that we restrict attention to input-output maps which
  are realizable by a $l_1$-stable \LSS\ of dimension at most $n$,
  and let $f$ be such an input-output map.
  In this case, one can replace the conditions of  
  Definition \ref{uni_pe:def}, that $\pi_v > 0$ by the condition
  that $\pi_{s} > 0$ for all $|s| \le 2n+1$ and still obtain
  asymptotically persistently exciting inputs for $f$.
 
  Indeed, consider now any $w \in \HYBINP^{\omega}$ which satisfies Definition \ref{uni_pe:def} with
  the exception that $\pi_{v} > 0$ is required only for 
  $|v| \le 2n+1$. Then Theorem 
  \ref{main:lemma} remains valid for this case (the proof remains
   literally the same) and from the proof of  Theorem \ref{uni:theo}
   we get that for all $i=1,\ldots, \mathbf{N}(2n-1)$,
  \[
      S^f(rv_iq)=\lim_{N \rightarrow \infty} (\frac{1}{N} \sum_{t=0}^{N} y_{t+|v|+1}u^T_t\chi(t,rv_iq))\mathcal{R}^{-1}\frac{1}{\pi_{rv_iq}}
  \]
  Hence, $\{M^f(v_i)\}_{i=1}^{\mathbf{N}(2n-1)}$ can asymptotically
  be estimated
  from $(w_N,\mathbf{O}(f,w_N))$.
  Since the modified Algorithm \ref{alg0} from Remark \ref{rem:interp2}
  determines a continuous map from
  $\{M^f(v_i)\}_{i=1}^{\mathbf{N}(2n-1)}$ to
  the other Markov-parameters of $f$,  
  $w$ is asymptotically persistently exciting for $f$.
 \end{Remark}

\section{Conclusions}
  We defined persistence of excitation for
  input signals of linear switched systems. We showed
  existence of persistently exciting input sequences
  and we identified several classes of input
  signals which are persistently exciting. 

  Future work includes finding less restrictive
  conditions for persistence of excitation and
  extending the obtained results to other classes of
  hybrid systems. 
	

\appendix
\section{Technical proofs}
\label{proofs}
 The proof of Theorem \ref{main:lemma} relies on
 the following result.
 \begin{Lemma}
 \label{main:basic:lemma}
  With the notation and assumptions of Theorem
  \ref{main:lemma}, for all $v \in Q^{+}$, 
  \[ \lim_{N \rightarrow \infty} \frac{1}{N} \sum_{t=0}^{N} x_{t}u^T_t\chi(t,v) =0 
   \]
 \end{Lemma}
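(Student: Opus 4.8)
The plan is to expand the state as a convolution of past inputs, substitute it into the Cesàro sum, and then separate the contribution of recent inputs (small lag) from that of old inputs (large lag): for recent inputs I would quote the PE condition directly, while for old inputs I would use $l_1$-stability to obtain a uniform tail bound. Concretely, from the recursion $x_{t+1}=A_{q_t}x_t+B_{q_t}u_t$ with $x_0=0$ one gets
\[ x_t=\sum_{k=0}^{t-1}A_{q_{k+1}\cdots q_{t-1}}B_{q_k}u_k, \]
where $A_{q_{k+1}\cdots q_{t-1}}$ is read in the sense of Notation \ref{repr:not1} (and equals the identity when $k=t-1$). Substituting this and reindexing by the lag $j=t-k\ge 1$, the quantity to estimate becomes a double sum whose generic term is $A_{q_{t-j+1}\cdots q_{t-1}}B_{q_{t-j}}u_{t-j}u_t^T\chi(t,v)$, averaged over $t$. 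I would fix a cutoff $J$ and split it into a near part ($1\le j\le J$) and a far part ($j>J$).

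For the near part, I would group terms by the switching word $\alpha=q_{t-j}\cdots q_{t-1}\in Q^{j}$, pulling the constant matrix factor $A_{\alpha_2\cdots\alpha_{j}}B_{\alpha_1}$ outside the time-average. Since $\chi(q_{t-j}\cdots q_{t-1}=\alpha)\,\chi(t,v)=\chi(q_{t-j}\cdots q_{t+|v|-1}=\alpha v)$, each resulting time-average is precisely of the form appearing in the second identity of Definition \ref{uni_pe:def}, taken with lag $j$ and word $\alpha v\in Q^{+}$; hence it tends to $0$ as $N\to\infty$. Because there are only finitely many lags $j\le J$ and finitely many words $\alpha\in Q^{j}$, the entire near part tends to $0$ for each fixed $J$.

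The far part is where stability enters, and controlling it uniformly in $N$ is the main obstacle. From $l_1$-stability I would first deduce $\sum_{w\in Q^{*}}\|A_w\|<\infty$ (apply the defining series to each standard basis vector and bound $\|A_w\|\le\sum_i\|A_we_i\|$), and set $R_J=\sum_{\ell\ge J}\max_{|w|=\ell}\|A_w\|$, so that $R_J\to 0$ as $J\to\infty$. Bounding $\|u_{t-j}\|\,\|u_t\|\le\tfrac12(\|u_{t-j}\|^2+\|u_t\|^2)$ and $\|B_q\|\le B_{\max}$, and swapping the order of summation, each of the two resulting double sums collapses to $\tfrac{1}{2}B_{\max}R_J\cdot\frac1N\sum_{t=0}^{N}\|u_t\|^2$, using that for a fixed outer index the words indexing the $A$-factors have pairwise distinct lengths $\ge J$. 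The third identity of Definition \ref{uni_pe:def}, summed over all one-letter words, shows that $\frac1N\sum_{t=0}^N\|u_t\|^2$ converges and is therefore bounded by a constant $U$ independent of $N$. Thus the norm of the far part is at most $B_{\max}U R_J$, uniformly in $N$.

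Finally I would combine the two estimates by a standard $\varepsilon/2$ argument: given $\varepsilon>0$, first choose $J$ so large that $B_{\max}U R_J<\varepsilon/2$, which bounds the far part for \emph{all} $N$; then let $N\to\infty$ so that the near part drops below $\varepsilon/2$. This gives $\limsup_{N\to\infty}\big\|\frac1N\sum_{t=0}^{N}x_t u_t^T\chi(t,v)\big\|\le\varepsilon$ for every $\varepsilon>0$, proving the limit is $0$. The only delicate point is the interchange of the $N\to\infty$ limit with the infinite sum over lags; the uniform tail bound supplied by $l_1$-stability together with the bounded input energy is exactly what legitimizes this interchange.
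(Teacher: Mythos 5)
Your proof is correct and follows essentially the same route as the paper's: expand the state as a convolution of past inputs, kill each fixed-lag time-average using the second PE identity, and use $l_1$-stability to produce a summable dominating tail that legitimizes interchanging the $N\to\infty$ limit with the infinite sum over lags --- exactly the ``$M$-test'' argument the paper employs, with your AM--GM bound $\|u_{t-j}\|\,\|u_t\|\le\tfrac12(\|u_{t-j}\|^2+\|u_t\|^2)$ playing the role of the paper's Cauchy--Schwarz estimate on the terms $b^r_{i,N}$. The only cosmetic differences are that the paper truncates the sum at a word index in lexicographic order rather than at a lag length $J$, and that you derive the boundedness of $\frac1N\sum_{t=0}^N\|u_t\|^2$ from the third PE identity explicitly where the paper merely asserts it.
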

 The intuition behind Lemma \ref{main:basic:lemma} is as follows.
 Each $x_t$ is a linear combination of inputs $u_{0},\ldots,u_{t-1}$.
 Hence, $\frac{1}{N} \sum_{t=0}^{N} x_{t}u^T_t$ can be
 expressed as linear combination of terms
 $\frac{1}{N} \sum_{t=k}^{N} u_{t-k}u_t^{T}\chi(t,s)$ for some
 $s \in Q^{*}$, $k=1,\ldots,N$. Since each such term converges
 to $0$ as $N \rightarrow \infty$, intuitively their
  linear combination should converge to $0$ as well.
 Unfortunately, the number of summands of the above 
 increases with $N$. In order to deal
 with this difficulty a technique similar to the $M$-test for
 double series has to be used.
 The assumption that $\Sigma$ is $l_1$-stable is 
 required for this technique to work.
 \begin{proof}[Proof of Theorem \ref{main:lemma}]
  We start with the proof of \eqref{main:lemma:eq1}.
  The proof goes by induction on the length of $v$.
  
  If $v=\epsilon$, then 
  \begin{equation}
  \label{main:lemma:pf:eq1}
   \begin{split}
    & \frac{1}{N} \sum_{t=0}^{N} x_{t+1}u_t^T\chi(t,r\beta)=  \\
    & \frac{1}{N} \sum_{t=0}^{N} (A_{q_t}x_{t}+B_{q_t}u_{t})u^T_t\chi(t,r\beta)= \\
    & \frac{1}{N} \sum_{t=0}^{N} A_{q_t}x_{t}u^T_t\chi(t,r\beta) + 
      \frac{1}{N} \sum_{t=0}^{N} B_{q_t}u_{t}u^T_t\chi(t,r\beta). 
   \end{split}
  \end{equation}
  Notice $A_{q_t}x_tu^T_t\chi(t,r\beta)=A_rx_tu^T_t\chi(t,r\beta)$ and
  $B_{q_t}u_tu_t^T\chi(t,r\beta)=B_ru_tu_t^T\chi(t,r\beta)$.
 Hence,
  \begin{equation}
  \label{main:lemma:pf:eq2}
   \begin{split}
    & \frac{1}{N} \sum_{t=0}^{N} A_{q_t}x_{t}u^T_t\chi(t,r\beta) +
      \frac{1}{N} \sum_{t=0}^{N} B_{q_{t}}u_{t}u^T_t\chi(t,r\beta)= \\
   & A_r(\frac{1}{N} \sum_{t=0}^{N} x_tu_t^T\chi(t,r\beta))+
     B_r(\frac{1}{N} \sum_{t=0}^{N} u_tu_t^T\chi(t,r\beta))
   \end{split}
  \end{equation}
 From the assumptions on $w$ it follows that
 \[ 
  \begin{split}
  & \lim_{N \rightarrow \infty} \frac{1}{N} \sum_{t=0}^{N} u_tu_t^T\chi(t,r\beta)=\mathcal{R}\pi_{r\beta}
  \end{split}
 \]
 Hence, from the PE conditions 
and Lemma \ref{main:basic:lemma} we get that 
  \[ 
    \begin{split}
     &  \lim_{N \rightarrow \infty}
        \frac{1}{N} \sum_{t=0}^{N} x_{t+1}u_t^T \chi(t,r\beta) = \\
     & A_r(\lim_{N \rightarrow \infty} \frac{1}{N} \sum_{t=0}^{n} x_tu_t^T\chi(t,r\beta))+ \\
       &+ B_r(\lim_{N \rightarrow \infty} \frac{1}{N} \sum_{t=0}^{n} u_tu_t^T\chi(t,r\beta)) = \\
     & A_r0+B_r \mathcal{R} \pi_{r\beta}=\pi_{r\beta}B_r \mathcal{R},
    \end{split}
  \]
 i.e. \eqref{main:lemma:eq1} holds. 

 Assume that \eqref{main:lemma:eq1} holds for all words
 of length at most $L$, and assume that $v=wq$, $|w|=L$
 for some $w \in Q^{*}$ and $q \in Q$. Then
 by the induction hypothesis and the assumptions on $w$
  \begin{equation}
  \label{main:lemma:pf:eq3}
   \begin{split}
    & \lim_{N \rightarrow \infty } \frac{1}{N} \sum_{t=0}^{N} x_{t+L+2}u_t^T\chi(t,rwq\beta)= \\
    & \lim_{N \rightarrow \infty} \frac{1}{N} \sum_{t=0}^{N} A_{q}x_{t+L+1}u^T_t\chi(t,rwq\beta) +  \\
      & +\lim_{N \rightarrow \infty} \frac{1}{N} \sum_{t=0}^{N} B_{q}u_{t+L+1}u^T_t\chi(t,rwq\beta)= \\
  &  =A_qA_wB_r\pi_{rwq\beta}+B_r0=A_{wq}B_r\mathcal{R}\pi_{rwq\beta}.
   \end{split}
  \end{equation}
 Finally, we prove \eqref{main:lemma:eq2}. Notice that
 \[ y_{t+|v|+2}u_t^T\chi(q,t,rvq\beta)=C_qx_{t+|v|+2}u_t^T\chi(t,rvq\beta) \]
 and hence by applying \eqref{main:lemma:eq1},
 \[  
   \begin{split}
    & \lim_{N \rightarrow \infty} \frac{1}{N} \sum_{t=0}^{N} y_{t+|v|+2}u_t^T\chi(t,rvq\beta)      = \\
   & C_q\lim_{N \rightarrow \infty} \frac{1}{N} \sum_{t=0}^{N} x_{t+|v|+2}u_t^T\chi(t,rvq\beta)=\\
   & C_qA_{v}B_{r} \mathcal{R}\pi_{rvq\beta}.
   \end{split}
 \]
 \end{proof}
 \begin{proof}[Proof of Lemma \ref{main:basic:lemma}]
  Notice that
  \[ 
    \begin{split}
     & \sum_{t=1}^{N} x_tu_t^T\chi(t,v)= \\
       & \sum_{t=1}^{N} \sum_{j=1}^{t-1} A_{q_{t-1}}\cdots A_{q_j}
  B_{q_{j-1}}u_{j-1}u_t^T\chi(t,v) = \\
     & \sum_{k=1}^{N-1} (\sum_{t=k}^{N} A_{q_{t-1}}\cdots A_{q_{t-k+1}}B_{q_{t-k}}u_{t-k}u_t^T\chi(t,v))= \\
     & \sum_{r \in Q} \sum_{k=0}^{N-1} \sum_{|s|=k}
       A_{s}B_{r} \sum_{t=k+1}^{N} u_{t-k-1}u_t^T\chi(t-k-1,rsv)= \\
     &\sum_{i=0}^{\mathbf{N}(N)} \sum_{r \in Q} A_{v_i}B_r
      \sum_{t=|v_i|+1}^{N} u_{t-|v_i|-1}u_t^T\chi(t-|v_i|-1,rv_iv).
    \end{split}
  \]
  In the last step we used the lexicographic ordering 
  of $Q^{*}$ from Remark \ref{rem:lex}.
  It then follows that
  \[
     \begin{split}
      & \frac{1}{N}  \sum_{t=1}^{N} x_tu_t^T\chi(t,v)= \\
      & \sum_{r \in Q} \sum_{i=0}^{\mathbf{N}(N)} A_{v_i}B_r
      \frac{1}{N} \sum_{t=|v_i|+1}^{N} u_{t-|v_i|-1}u_t^T\chi(t-|v_i|-1,rv_iv).
     \end{split}
  \] 
  Define
  \[ 
    \begin{split}
		& b^r_{i,N}=\frac{1}{N} \sum_{t=|v_i|+1}^{N} u_{t-|v_i|-1}u_t^T\chi(t-|v_i|-1,rv_iv) \\
    & a^r_{i,N} = A_{v_i}B_r b^r_{v_i,N}.
    \end{split}
   \]
  Then the statement of the lemma can be shown by showing that
  for all $r \in Q$, $i=1,2,\ldots$, 
  \[ \lim_{N \rightarrow \infty} \sum_{k=0}^{\mathbf{N}(N)} a^r_{i,N} = 0.  \]
  To this end, notice from the PE conditions that 
  \[ 
    \begin{split}
     & \lim_{N \rightarrow \infty} a^r_{i,N} =  \\
     & A_{v_i}B_r\lim_{N \rightarrow \infty}
          \frac{1}{N} \sum_{t=k+1}^{N} u_{t-k-1}u_{t}^T
                  \chi(t-k-1,rv_iv)=0. 
    \end{split}
  \]
  Moreover, for a fixed $N$ and $k$, we can get the following
  estimate
  \[ 
     ||a^r_{i,N}||_{2} \le ||A_{v_i}B_{r}||_{2}||b_{i,N}^r||_{2}. 
  \]
  If we can show that $||b_{v_i,N}^r||_{2}$ is bounded by a 
  number $K$, then we get that
  \[ ||a^r_{i,N}||_{2} \le ||A_{v_i}B_r||_{2}K. \]
 The latter inequality is already sufficient to finish the
 proof. Indeed, 
  let $D^r_i=||A_{v_i}B_r||_2K$ and notice from the $l_1$-stability assumption on the realization $\Sigma$ that  
 \[ \sum_{i=1}^{\infty} D^r_i=K\sum_{v \in Q^{*}} ||A_vB_r||_2\]
  is convergent.
  Hence,  we get that
   for every $\epsilon > 0$ there exists a 
     $I_{\epsilon}$ such that 
  \[ \sum_{i=I_{\epsilon}+1}^{\infty}  D_i^r < \epsilon/2.
  \]
  For every $N> I_{\epsilon}$,
  \[
    \begin{split}
     & ||\sum_{i=1}^{\mathbf{N}(N)} a^r_{i,N}||_2 =
     ||\sum_{i=1}^{I_{\epsilon}} a^r_{i,N}+\sum_{i=I_{\epsilon}+1}^{\mathbf{N}(N)} a^r_{i,N}||_2 \le  \\
     & \sum_{i=1}^{I_{\epsilon}} ||a^r_{i,N}||_2+
       \sum_{i=I_{\epsilon}+1}^{\mathbf{N}(N)} D^r_i < \sum_{i=1}^{I_{\epsilon}}       ||a^r_{i,N}||_2 + \epsilon/2.
    \end{split}
  \]
  Since $\lim_{N \rightarrow \infty} a^r_{i,N}=0$, there exists
  $N_{\epsilon} \in \mathbb{N}$ such that for all $N > N_{\epsilon}$,
  $||a_{i,N}^{r}||_2 < \frac{\epsilon}{2I_{\epsilon}}$.
  Define $\hat{N}_{\epsilon}$ to be an integer such that
  $\hat{N}_{\epsilon} > N_{\epsilon}$ and $\mathbf{N}(\hat{N}_{\epsilon}) > I_{\epsilon}$. Then for every $N > \hat{N}_{\epsilon}$,
  $\mathbf{N}(N) \ge \mathbf{N}(\hat{N}_{\epsilon}) > I_{\epsilon}$ and 
  \[
\begin{split}
		&	||\sum_{i=1}^{\mathbf{N}(N)} a^r_{i,N}||_{2} \le 
			\sum_{i=1}^{I_{\epsilon}} ||a^r_{i,N}||_2 + \epsilon/2 < \\
		& I_{\epsilon}\frac{\epsilon}{2I_{\epsilon}}+\epsilon/2 =
				\epsilon/2+\epsilon/2=\epsilon.
	\end{split}
  \]
  In other words, $\lim_{N \rightarrow 0} \sum_{i=1}^{\mathbf{N}(N)} a_{i,N}^r=0$.

 It is left to show that $||b_{i,N}^r||_{2} \le K$ for some $K > 0$ and for all $i=1,2,\ldots$, $r \in Q$.
  \begin{equation} \label{lemma:pf1}
   \begin{split}
     & ||b^r_{i,N}||_{2} \le
 \Big\|\frac{1}{N} \sum_{t=|v_i|+1}^{N} u_{t-|v_i|-1}u_{t}^T\chi(t-|v_i|-1,rv_iv)\Big\|_{2} \le \\
 & \Big\|\frac{1}{N} \sum_{t=|v_i|+1}^{N} u_{t-|v_i|-1}u_{t}^T\chi(t-|v_i|-1,rv_iv)\Big\|_{F} = \\
     & 
      \Big[\sum_{i,j=1}^{m} \frac{1}{N^2}\Big\{\sum_{t=|v_i|+1}^{N} 
        (u_{t-|v_i|-1})_i\chi(t-|v_i|-1,rv_iv)(u_t)_j\Big\}^2\Big]^{1/2}.
   \end{split}
  \end{equation}
where $||.||_F$ denotes the matrix Frobenius-norm, 
and $||.||_2$ denotes the matrix norm induced by the Euclidean norm.
 The application of the Cauchy-Schwartz inequality to
 $(\sum_{t=|v_i|+1}^{N} (u_{t-|v_i|-1})_i\chi(t-|v_i|-1,rv_iv)(u_t)_j)^2$
 leads to 
 \begin{equation}
 \label{lemma:pf2} 
 \begin{split}
   & \Big[\sum_{t=|v_i|+1}^{N} (u_{t-|v_i|-1})_i\chi(t-|v_i|-1,rv_iv)(u_t^T)_j\Big]^2
    \le \\
   & \Big(\sum_{t=|v_i|+1}^{N} (u_{t-|v_i|-1})_i^2\chi(t-|v_i|-1,rv_iv)\Big)
    \Big(\sum_{t=|v_i|}^{N} (u_t)^2_j\Big).
  \end{split}
 \end{equation}
 Notice that $(u_{t-|v_i|-1})_i^2\chi(t-|v_i|-1,rv_iv) \le (u_{t-|v_i|-1})_i^2$, since $\chi(t-|v_i|-1,rv_iv) \in [0,1]$. Hence,
 \[ 
   \begin{split}
    & \sum_{t=|v_i|+1}^{N} (u_{t-|v_i|-1})_i^2\chi(t-|v_i|-1,rv_iv)  \le \\
    & \le \sum_{t=|v_i|+1}^{N} (u_{t-|v_i|-1})_i^2 \le \sum_{t=0}^{N} (u_t)_i^2.
   \end{split}
 \]
 Similarly,
  \[ \sum_{t=|v_i|+1}^{N} (u_{t})^2_j \le
      \sum_{t=0}^{N} (u_t)_j^2.
  \]
  Combining these remarks with \eqref{lemma:pf2},
  we obtain
  \begin{equation}
  \label{lemma:pf3}
  \begin{split}
  & \Big[\frac{1}{N^2} \sum_{t=|v_i|+1}^{N} (u_{t-|v_i|-1})_i\chi(t-|v_i|-1,rv_iv)(u_t^T)_j\Big]^2  \\
   & \le \Big(\frac{1}{N}\sum_{t=0}^{N} (u_t)^2_{i}\Big)\Big(\frac{1}{N}\sum_{t=0}^{N} (u_t)_j^2\Big).  
  \end{split}
  \end{equation}
  Notice that
  $\lim_{N \rightarrow \infty } \frac{1}{N} \sum_{t=0}^{N} (u_t)^2_{i}=\mathcal{R}_{ii}$ and hence
  $\frac{1}{N}\sum_{t=0}^{N} (u_t)^2_{i}$ is bounded from above
  by some positive number $K_i$. Using this fact and
  by substituting \eqref{lemma:pf3} into \eqref{lemma:pf1},
  we obtain
  \[ 
     ||b^r_{i,N}||_{2} \le 
     (\sum_{i,j=1}^{m} K_iK_j)^{1/2}.
  \]
  Hence, if we set $K=\sum_{i,j=1}^{m} K_iK_j$, then
  then $||b^r_{i,N}||_{2} \le K$, which is what had to be
  shown.
 \end{proof}


\end{document}